\newtheorem{theorem}{Theorem}[section]
\newtheorem{lemma}[theorem]{Lemma}
\newtheorem{proposition}[theorem]{Proposition}
\newtheorem{corollary}[theorem]{Corollary}
\newenvironment{proof}{\begin{trivlist} \item[]{\em Proof.}}{\end{trivlist}}
\newcommand\be{\begin{equation}}
\newcommand\ee{\end{equation}}
\newcommand\bn{\begin{eqnarray}}
\newcommand\en{\end{eqnarray}}
\newcommand\bns{\begin{eqnarray*}}
\newcommand\ens{\end{eqnarray*}}
\newcommand\bR{{\mathbb R}}
\newcommand\bN{{\mathbb N}}
\newcommand\bZ{{\mathbb Z}}
\newcommand{\seqnum}[1]{{\underline{#1}}}
\def\eop{\hfill\rule{2.0mm}{2.0mm}}
\def\a{\alpha}
\title{ Use Impulse Response Sequences in the Construction of Number Sequence Identities}
\author{Tian-Xiao He \\
{\small Department of Mathematics }\\
 {\small Illinois Wesleyan University}\\
 {\small Bloomington, IL 61702-2900, USA}\\
}
\date{}
\begin{document}

\maketitle
\setcounter{page}{1}
\pagestyle{myheadings}
\markboth{T. X. He }
{Sequence Identities}

\begin{abstract}
\noindent We define impulse response sequence in the set of all linear recurring sequences satisfying a linear recurrence relation of order $r$. The generating function and expression of the impulse response sequence are presented. Some identities of impulse response sequences including a type of nonlinear expressions are established. The interrelationship between the impulse response sequence and other linear recurring sequences in the same set is given, which is used to transfer the  identities of impulse response sequences to those of the linear recurring sequences in the same set. Some applications of impulse response sequences to the structure of Stirling numbers of the second order, the Wythoff array, and the Boustrophedon transform are studied.

\vskip .2in
\noindent
AMS Subject Classification: 05A15, 05A19, 11B39, 11B73, 11B75, 11Y55. 

\vskip .2in
\noindent
{\bf Key Words and Phrases:} generating function, number sequence, linear recurrence relation, Stirling numbers of the second kind, Wythoff array, Boustrophedon transform, Fibonacci numbers, Pell numbers, Jacobsthal numbers, Lucas numbers. 
\end{abstract}

\section{Introduction}

Many number and polynomial sequences can be defined, characterized, evaluated, and classified by linear recurrence relations with certain orders. A number sequence $\{ a_n\}$ is called sequence of order $r$ if it satisfies the linear recurrence relation of order $r$
\begin{equation}\label{eq:1}
a_n= \sum^r_{j=1}p_j a_{n-j}, \quad n\geq r, 
\end{equation}
for some constants $p_j$ ($j=1,2,\ldots, r$), $p_r\not= 0$, and initial conditions $a_j$ ($j=0,1,\ldots, r-1$). Linear recurrence relations with constant coefficients are important in subjects including combinatorics, pseudo-random number generation, circuit design, and cryptography, and they have been studied extensively. To construct an explicit formula of the general term of a number sequence of order $r$,  one may use generating function, characteristic equation, or a matrix method (See Comtet \cite{Com},  Hsu \cite{HSU83}, Niven, Zuckerman, and Montgomery \cite{NZM}, Strang \cite{Str},  Wilf \cite{Wilf}, etc.) Let $A_{r}$ be the set of all linear recurring sequences defined by the homogeneous linear recurrence relation (\ref{eq:1}) with coefficient set $E_{r}=\{ p_{1}, p_{2}, \ldots, p_r\}$. To study the structure of $A_r$ with respect to $E_r$, we define the impulse response sequence (or, IRS for abbreviation) in $A_r$, which is a particular sequence in $A_r$ with initials $a_0=a_1=\cdots =a_{r-2}=0$ and $a_{r-1}=1$. 

In next section, we will give the generating function and the expression of the IRS and find out the interrelationships between the IRS and the sequences in the same set $A_r$. A numerous examples of IRS in sets $A_2$ and $A_3$ will be giving in Section $3$. In \cite{HS}, Shiue and the authors presented a method of the construction of general term expressions and identities for the linear recurring sequences of order $r=2$ by using the reduction order method. However, the reduction method is too complicated for the linear recurring sequences of order $r>2$. In Sections $2$ and $3$, we will see that our method by using IRS can be applied to linear recurring sequences with arbitrary order readily in the construction of their general term expressions and identities. In Section $4$, with using the symbol method shown in \cite{Hsu}, we derive a type of identities of IRS in $A_2$ including a type of nonlinear expressions. The relationship between the IRS and other linear recurring sequences in the same set is used to transfer the identities of IRS to those of the linear recurring sequences in the same set. Finally, we present some applications of IRS in the discussion of structure of Stirling numbers of the second kind, the Wythoff array, and the Boustrophedon transform.

\section{Impulse response sequences}
Among all the homogeneous linear recurring sequences satisfying $r$th order homogeneous linear recurrence relation (\ref{eq:1}) 
with a nonzero $p_{r}$ and arbitrary initials $\{ a_{j}\}^{r}_{j=0}$, we define the impulse response sequence (IRS) with respect to $E_{r}=\{ p_{j}\}^{r}_{j=1}$ as the sequence with initials $a_{0}=a_{r-2}=0$ and $a_{r-1}=1$. In particular, when  $r=2$, the homogeneous linear recurring sequences with respect to $E_{2}=\{ p_{1}, p_{2}\}$ satisfy 

\be\label{eq:5.1}
a_n= p_{1}a_{n-1}+p_{2} a_{n-2}, \quad n\geq 2
\ee
with arbitrary initials $a_{0}$ and $a_{1}$, or, initial vector $(a_0, a_1)$. If the initial vector $(a_{0}, a_{1})=(0,1)$, the corresponding sequence generated by using (\ref{eq:5.1}) is the impulse response sequence with respect to $E_{2}$. For instance, Fibonacci sequence $\{ F_{n}\}$ is the IRS with respect to $\{ 1,1\}$, Pell number sequence $\{ P_{n}\}$ is the IRS with respect to $\{ 2,1\}$, and Jacobathal number sequence $\{ J_n\}$ is the IRS with respect to $\{1,2\}$. 

In the following, we will present the structure of the linear recurring sequences defined by (\ref{eq:1}) using their characteristic polynomial. Then, we may find the interrelationship of those sequences with the impulse response sequence. 

Let $A_{r}$ be the set of all linear recurring sequences defined by the homogeneous linear recurrence relation (\ref{eq:1}) with coefficient set $E_{r}=\{ p_{1}, p_{2}, \ldots, p_r\}$, and let $\tilde F^{( r)}_{n}$ be the IRS of $A_{r}$, namely,  $\tilde F^{(r)}_n$ satisfies (\ref{eq:1}) with initials $\tilde F^{(r)}_0=\cdots =\tilde F^{(r)}_{r-2}=0$ and $\tilde F^{(r)}_{r-1}=1$. 

Suppose $\{ a_n\}\in A_r$. Let $B=|p_1|+|p_2|+\cdots +|p_r|$, and for $n\in {\bN}_0$ let $M_n=\max\{ |a_0|, |a_1|, \ldots, |a_n|\}$. Thus, for $n\geq r$, $M_n\leq BM_{n-1}$. By induction, there holds $|a_n|\leq C B^n$ for some constant $A$ and $n=0,1,2,\ldots.$ A sequence satisfying a bound of this kind is called a sequence having at most exponential growth (see\cite{NZM}). Therefore, the generating function $\sum_{j\geq 0} a_j t^j$ of such sequence has a positive radius of convergence, $0< c< 1/B$. More precisely, if $|t|\leq c$, then, by comparison with the convergent geometric series $\sum_{j\geq 0} CB^j c^j$, the series $\sum_{j\geq 0} a_j t^j$ converges absolutely. In the following, we always assume that $\{ a_n\}$ have at most exponential growth.

\begin{proposition}\label{pro:3.1}
Let $\{ a_{n}\}\in A_{r}$, i.e., let $\{ a_{n}\}$ be the linear recurring sequence defined by (\ref{eq:1}). Then its generating function $P(t)$ can be written as

\be\label{eq:3.1}
P_r(t)=\{ a_{0}+\sum^{r-1}_{n=1}\left( a_{n}-\sum^{n}_{j=1}p_{j}a_{n-j}\right)t^{n}\}/\{1-\sum^{r}_{j=1}p_{j}t^{j}\}.
\ee
Hence, the generating function for the IRS with respect to $\{ p_j\}$ is 

\be\label{eq:3.2}
\tilde P_r(t)=\frac{ t^{r-1}}{1-\sum^{r}_{j=1}p_{j}t^{j}}.
\ee
\end{proposition}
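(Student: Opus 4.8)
The plan is to work directly with the power series $P_r(t)=\sum_{n\geq 0}a_n t^n$, which is legitimate since the paragraph preceding the statement has established that $\{a_n\}$ has at most exponential growth and hence $P_r(t)$ converges absolutely for $|t|\leq c<1/B$; this absolute convergence is what permits the rearrangement of the double series that follows. The strategy is to multiply $P_r(t)$ by the denominator $1-\sum_{j=1}^r p_j t^j$ and show that the recurrence (\ref{eq:1}) collapses the product to a polynomial of degree $r-1$, which will be exactly the numerator in (\ref{eq:3.1}).

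First I would expand the product and reindex the convolution term by setting $m=n+j$:
\[
P_r(t)\Bigl(1-\sum_{j=1}^r p_j t^j\Bigr)=\sum_{n\geq 0}a_n t^n-\sum_{j=1}^r p_j\sum_{m\geq j}a_{m-j}\,t^m .
\]
Collecting the coefficient of $t^n$ on the right gives $a_n-\sum_{j=1}^{\min(n,r)}p_j a_{n-j}$, where the upper limit is $\min(n,r)$ precisely because the constraint $m\geq j$ in the reindexed sum forbids $j>n$. The decisive step is then to split on the size of $n$. For $n\geq r$ the inner sum runs over the full range $j=1,\dots,r$, so the coefficient is $a_n-\sum_{j=1}^r p_j a_{n-j}$, which vanishes identically by the recurrence (\ref{eq:1}); thus every term of degree $\geq r$ disappears. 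For $0\leq n\leq r-1$ the truncation leaves the coefficient $a_n-\sum_{j=1}^n p_j a_{n-j}$ (with the $n=0$ term reducing to $a_0$). Hence the product equals the bracketed numerator of (\ref{eq:3.1}), and dividing through by $1-\sum_{j=1}^r p_j t^j$ yields (\ref{eq:3.1}).

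The main obstacle is not analytic but a matter of careful index bookkeeping: one must track that the truncation $\min(n,r)=n$ for $n<r$ is what produces $\sum_{j=1}^n$ rather than $\sum_{j=1}^r$ in the numerator, and that the degree-$\geq r$ terms cancel only after the full inner range is available. Getting this range correct for the boundary indices $n=0,1,\dots,r-1$ is the crux, so I would verify the endpoint $n=r-1$ explicitly and confirm the $n=0$ term collapses to $a_0$.

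Finally, to deduce (\ref{eq:3.2}) I would substitute the IRS initials $a_0=\cdots=a_{r-2}=0$ and $a_{r-1}=1$ into the numerator of (\ref{eq:3.1}). For each $n$ with $0\leq n\leq r-2$ one has $a_n=0$ and every subtracted term $a_{n-j}$ has index $0\leq n-j\leq r-3$, hence also vanishes, so all these coefficients are zero; for $n=r-1$ the subtracted sum involves only $a_0,\dots,a_{r-2}=0$, leaving $a_{r-1}=1$. The numerator therefore reduces to the single monomial $t^{r-1}$, giving $\tilde P_r(t)=t^{r-1}/(1-\sum_{j=1}^r p_j t^j)$, which is (\ref{eq:3.2}).
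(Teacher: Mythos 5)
Your proposal is correct and follows essentially the same route as the paper: both arguments use the recurrence to show that every coefficient of degree $\geq r$ in the product $P_r(t)\bigl(1-\sum_{j=1}^{r}p_j t^j\bigr)$ vanishes, leaving the degree-$(r-1)$ polynomial numerator, and then obtain (\ref{eq:3.2}) by substituting the IRS initials. The only difference is presentational — you multiply $P_r(t)$ by the denominator and extract coefficients, whereas the paper sums the recurrence $a_n t^n$ over $n\geq r$ and rearranges — but the underlying computation is identical.
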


\begin{proof}
Multiplying $t^{n}$ on the both sides of (\ref{eq:1}) and summing from $n=r$ to infinite yields 

\bns
&&\sum_{n\geq r}a_{n}t^{n}=\sum_{n\geq r}\sum^{r}_{j=1}p_{j}a_{n-j}t^{n}=\sum^{r}_{j=1}p_{j}\sum_{n\geq r}a_{n-j}t^{n}\\
&=&\sum^{r-1}_{j=1}p_{j}\left[ \sum_{n\geq j}a_{n-j}t^{n}-\sum^{r-1}_{n=j}a_{n-j}t^{n}\right]+p_{r}\sum_{n\geq r}a_{n-r}t^{n}\\
&=&\sum^{r-1}_{j=1}p_{j}\left[ t^{j}\sum_{n\geq 0}a_{n}t^{n}-\sum^{r-1}_{n=j}a_{n-j}t^{n}\right] +p_{r}t^{r}\sum_{n\geq 0}a_{n}t^{n}\\
\ens
Denote $P(t)=\sum_{n\geq 0}a_{n}t^{n}$. Then the equation of the leftmost and the rightmost sides can be written as 

\[
P_r(t)-\sum^{r-1}_{n=0}a_{n}t^{n}=P(t)\sum^{r-1}_{j=1}p_{j}t^{j} -\sum^{r-1}_{j=1}\sum^{r-1}_{n=j}p_{j}a_{n-j}t^{n}+p_{r}t^{r}P(t),
\]
which implies 

\[
P_r(t)\left( 1-\sum^{r}_{j=1}p_{j}t^{j}\right)=\sum^{r-1}_{n=0}a_{n}t^{n}-\sum^{r-1}_{n=1}\sum^{n}_{j=1}p_{j}a_{n-j}t^{n}.
\]
Therefore, (\ref{eq:3.2}) follows immediately. By substituting $a_{0}=\cdots =a_{r-2}=0$ and $a_{r-1}=1$ into (\ref{eq:3.1}), we obtain (\ref{eq:3.2}).
\end{proof}
\eop

We now give the explicit expression of $\tilde F^{(r)}_n$ in terms of the roots of the characteristic polynomial of recurrence relation shown in (\ref{eq:1}).

\begin{theorem}\label{thm:1.3}
Sequence $\{\tilde F^{(r)}_n\}_n$ is the IRS with respect to $\{p_j\}$ defined by (\ref{eq:1}) with $\tilde F^{( r)}_{0}=\tilde F^{( r)}_{1}=\cdots =\tilde F^{( r)}_{r-2}=0$ ane $\tilde F^{(r)}_{r-1}=1$ if and only if there holds 

\be\label{eq:5.7}
\tilde F^{(r)}_n =\sum^\ell_{j=1}\frac{ {n\choose m_j-1}\alpha^{n-m_j+1}_j}{\Pi^\ell_{k=1, k\not= j}(\alpha_j-\alpha_k)}
\ee
for $n\geq r$,  where $\alpha_1,\alpha_2,\ldots, \alpha_\ell$ are the distinct real or complex roots of characteristic polynomial,  $p(t)=t^r-p_1t^{r-1}-\cdots -p_r$, of the recurrence relation (\ref{eq:1}) with the multiplicities $m_1,m_2,\ldots, m_\ell$, respectively, and $m_1+m_2+\cdots +m_\ell=r$. In particular, if all $\alpha_1,\alpha_2, \ldots$, $\alpha_r$ are of multiplicity one,  then (\ref{eq:5.7}) becomes to 

\be\label{eq:5.7.2}
\tilde F^{(r)}_n=\sum^r_{j=1}\frac{\alpha^n_j}{\Pi^r_{k=1,k\not= j} (\alpha_j-\alpha_k)}, \quad n\geq r,
\ee
\end{theorem}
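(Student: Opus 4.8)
The plan is to start from the generating function of the IRS already obtained in Proposition~\ref{pro:3.1}, namely $\tilde P_r(t)=t^{r-1}/(1-\sum_{j=1}^r p_j t^j)$, and to recover $\tilde F^{(r)}_n=[t^n]\tilde P_r(t)$ by a partial fraction decomposition. The first step is to tie the denominator to the characteristic polynomial. Since $m_1+\cdots+m_\ell=r$, replacing $t$ by $1/t$ gives $1-\sum_{j=1}^r p_j t^j=t^r p(1/t)=\prod_{j=1}^\ell(1-\alpha_j t)^{m_j}$, so that
\[
\tilde P_r(t)=\frac{t^{r-1}}{\prod_{j=1}^\ell(1-\alpha_j t)^{m_j}}.
\]

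I would dispatch the simple-root case first, since it is clean and yields (\ref{eq:5.7.2}) directly. When every $m_j=1$ and $\ell=r$, write $\tilde P_r(t)=\sum_{j=1}^r c_j/(1-\alpha_j t)$; the residue at $t=1/\alpha_j$ is $c_j=\alpha_j^{-(r-1)}/\prod_{k\neq j}(1-\alpha_k/\alpha_j)$, and clearing the $\alpha_j$ factors collapses this to $c_j=1/\prod_{k\neq j}(\alpha_j-\alpha_k)$. Extracting $[t^n]$ from each simple pole via $1/(1-\alpha_j t)=\sum_n\alpha_j^n t^n$ then gives exactly (\ref{eq:5.7.2}).

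For the general statement I would carry out the full expansion $\tilde P_r(t)=\sum_{j=1}^\ell\sum_{i=1}^{m_j}c_{j,i}/(1-\alpha_j t)^i$ and expand each factor by the negative binomial series $1/(1-\alpha_j t)^i=\sum_n\binom{n+i-1}{i-1}\alpha_j^n t^n$, so that $\tilde F^{(r)}_n=\sum_{j,i}c_{j,i}\binom{n+i-1}{i-1}\alpha_j^n$, a sum of the form $\sum_j P_j(n)\alpha_j^n$ with $\deg P_j<m_j$. It is illuminating that this coefficient is the confluent divided difference of $x\mapsto x^n$ at the nodes $\alpha_1,\dots,\alpha_\ell$ with multiplicities $m_1,\dots,m_\ell$: the residue computation above is just the classical divided-difference formula, and the term $\binom{n}{m_j-1}\alpha_j^{n-m_j+1}$ appearing in (\ref{eq:5.7}) is precisely $\frac{1}{(m_j-1)!}\frac{d^{m_j-1}}{dx^{m_j-1}}x^n$ evaluated at $x=\alpha_j$.

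The main obstacle is exactly this repeated-root bookkeeping. Applying the Leibniz rule to $x^n/\prod_{k\neq j}(x-\alpha_k)^{m_k}$ (equivalently, solving for all the $c_{j,i}$) produces for each repeated root a full polynomial in $n$ of degree $m_j-1$, together with factors $(\alpha_j-\alpha_k)^{m_k}$; the delicate point is to verify whether (\ref{eq:5.7}) captures the entire coefficient or only the leading contribution coming from the derivatives that fall on $x^n$. I would therefore test the general reduction against a small case such as $p(t)=(t-\alpha)^2(t-\beta)$ before committing to it. Finally, for the converse half of the ``if and only if,'' I would note that any sequence of the displayed form satisfies (\ref{eq:1}) — each $\binom{n}{s}\alpha_j^n$ with $s<m_j$ being annihilated by the recurrence operator whose characteristic polynomial is $p(t)$ — and matches the IRS initial data, so by uniqueness of order-$r$ sequences with prescribed initials the two sequences coincide.
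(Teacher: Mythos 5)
Your treatment of the simple-root case is correct and, if anything, cleaner than the paper's: the paper proves (\ref{eq:5.7.2}) by induction on $r$, peeling off one factor $(1-\alpha_{m+1}t)$ at a time and solving the resulting first-order recurrence, whereas your residue computation produces the partial-fraction coefficients $1/\prod_{k\neq j}(\alpha_j-\alpha_k)$ in one step. Your converse argument (annihilation by the recurrence operator plus matching initial data and uniqueness) is also the same in substance as the paper's sufficiency check.

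The real issue is the multiple-root case, and your suspicion that (\ref{eq:5.7}) captures only the contribution where all derivatives fall on $x^n$ is exactly right: the test you propose kills the formula. Take $p(t)=(t-1)^2(t-2)=t^3-4t^2+5t-2$, so $p_1=4$, $p_2=-5$, $p_3=2$; the IRS is $0,0,1,4,11,26,\ldots$ with closed form $\tilde F^{(3)}_n=2^n-n-1$, while (\ref{eq:5.7}) gives
\[
\frac{\binom{n}{1}\,1^{\,n-1}}{1-2}+\frac{2^n}{2-1}=2^n-n,
\]
off by the constant term. The mechanism is the one you identified: the coefficient attached to a root $\alpha_j$ of multiplicity $m_j$ is the order-$(m_j-1)$ confluent divided difference at $\alpha_j$ of the \emph{whole} function $x\mapsto x^n/\prod_{k\neq j}(x-\alpha_k)^{m_k}$, and the Leibniz rule spreads derivatives onto the rational factor, producing a full degree-$(m_j-1)$ polynomial in $n$ of which (\ref{eq:5.7}) keeps only one term. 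The paper's own proof commits precisely this interchange error: in the limit display (\ref{eq:5.7.7}) it pulls the factor $1/\prod_{k\neq i}(\alpha_i-\alpha_k)$ outside the limit, even though in (\ref{eq:5.7.2}) that product is evaluated at the coalescing node $\alpha_{i_j}$ (hence varies with $j$) and multiplies terms that blow up like $(\alpha_{i_1}-\alpha_{i_2})^{-(m_i-1)}$, so the replacement is not legitimate; its sufficiency computation has a matching slip, writing $\tilde F^{(r)}_{n-i}$ with $\binom{n}{m_j-1}$ instead of $\binom{n-i}{m_j-1}$. So do not try to complete the general case as stated: (\ref{eq:5.7}) is correct only when $\ell=1$ or when every $m_j=1$, and the honest general statement must carry all of your coefficients $c_{j,i}$, $1\le i\le m_j$.
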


\begin{proof} First, we prove (\ref{eq:5.7.2}). Then we use it to prove the necessity 
of (\ref{eq:5.7}). 
Let $\alpha_{j}$,  $j=1,2,\ldots,\ell$, be $\ell$ roots of the characteristic polynomial $t^{n}-\sum^{r}_{i=1}p_{i}t^{n-i}$. Then from (\ref{eq:3.2}) 

\bn\label{eq:5.7.3}
&&\tilde P_r(t)=\frac{t^{r-1}}{1-\sum^r_{j=1}p_j t^j}=\frac{1/t}{(1/t)^r-\sum^r_{j=1}p_j(1/t)^{n-j}}\nonumber \\
&=&\frac{1/t}{\Pi^r_{j=1}(1/t-\alpha_j)}=\frac{t^{r-1}}{\Pi^r_{j=1} (1-\alpha_j t)}.
\en
We now prove (\ref{eq:5.7.2}) from (\ref{eq:5.7.3}) by using mathematical induction for $r$ under the assumption of that all solutions $\alpha_j$ are distinct. 
Because $[t^n]\tilde P_r(t)=\tilde F^{(r)}_n$, we need to show 

\be\label{eq:5.7.4}
[t^n]\tilde P_r(t)=[t^n] \frac{t^{r-1}}{\Pi^r_{j=1} (1-\alpha_j t)}=\sum^r_{j=1}\frac{\alpha^n_j}{\Pi^r_{k=1,k\not= j} (\alpha_j-\alpha_k)}, \quad n\geq r.
\ee
(\ref{eq:5.7.4}) is obviously true for $r=1$. Assume (\ref{eq:5.7.4}) holds for $r=m$. We find 

\[
[t^n]\tilde P_{m+1}(t)=[t^n] \frac{t^{m}}{\Pi^{m+1}_{j=1} (1-\alpha_j t)},
\]
which implies 

\[
[t^n] (1-\alpha_{m+1}t)\tilde P_{m+1}(t)=[t^{n-1}] \frac{t^{m-1}}{\Pi^{m}_{j=1} (1-\alpha_j t)}
=\sum^m_{j=1}\frac{\alpha^{n-1}_j}{\Pi^m_{k=1,k\not= j} (\alpha_j-\alpha_k)}
\]
because of the induction assumption. Noting $[t^n] \tilde P_{m+1}(t)=\tilde F^{(m+1)}_n$, from the last equations one may write 

\be\label{eq:5.7.5}
\tilde F^{(m+1)}_n -\alpha_{m+1}\tilde F^{(m+1)}_{n-1}=\sum^m_{j=1}\frac{\alpha^{n-1}_j}{\Pi^m_{k=1,k\not= j} (\alpha_j-\alpha_k)},
\ee
which has the solution

\be\label{eq:5.7.6}
\tilde F^{(m+1)}_n= \sum^{m+1}_{j=1}\frac{\alpha^{n}_j}{\Pi^{m+1}_{k=1,k\not= j} (\alpha_j-\alpha_k)}.
\ee
Indeed, we have 

\bns
&&\alpha_{m+1}\tilde F^{(m+1)}_{n-1}+\sum^m_{j=1}\frac{\alpha^{n-1}_j}{\Pi^m_{k=1,k\not= j} (\alpha_j-\alpha_k)}\\
&=&\alpha_{m+1}\sum^{m+1}_{j=1}\frac{\alpha^{n-1}_j}{\Pi^{m+1}_{k=1,k\not= j} (\alpha_j-\alpha_k)}+\sum^m_{j=1}\frac{\alpha^{n-1}_j}{\Pi^m_{k=1,k\not= j} (\alpha_j-\alpha_k)}\\
&=&\sum^m_{j=1}\frac{\alpha^{n-1}_j}{\Pi^{m+1}_{k=1,k\not= j} (\alpha_j-\alpha_k)}\left( \alpha_{m+1}+\alpha_j-\alpha_{m+1}\right) +\frac{\alpha^{n}_{m+1}}{\Pi^{m}_{k=1} (\alpha_{m+1}-\alpha_k)}\\
&=&\sum^{m+1}_{j=1}\frac{\alpha^{n}_j}{\Pi^{m+1}_{k=1,k\not= j} (\alpha_j-\alpha_k)}=\tilde F^{(m+1)}_n,
\ens
and (\ref{eq:5.7.6}) is proved. 

To prove (\ref{eq:5.7}), it is sufficient to show it holds for the case of one multiple root, say $\alpha_i$ with the multiplicity $m_i$. The formula of $\tilde F^{(r)}_n$ in this case can be derived from (\ref{eq:5.7.2}) by using the limit process. More precisely, let $m_i$ roots of the characteristic polynomial, denoted by $\alpha_{i_j}$ ($1\leq j\leq m_i$), approach to $\alpha_i$, then (\ref{eq:5.7.2}) will be reduced to (\ref{eq:5.7}) with respect to $\alpha_i$. Indeed, taking the limit as $\alpha_{i_j}$ ($1\leq j\leq m_i$) approaching to $\alpha_i$ on the both sides of (\ref{eq:5.7.2}) yields 

\bn
&&\lim_{(\alpha_{i_1}, \ldots, \alpha_{i_{m_i}})\to (\alpha_i, \ldots, \alpha_i)}\tilde F^{(r)}_n\nonumber\\
&=&\frac{1}{\Pi^{r-m_i+1}_{k=1,k\not= i}(\alpha_i-\alpha_k)}\lim_{(\alpha_{i_1}, \ldots, \alpha_{i_{m_i}})\to (\alpha_i, \ldots, \alpha_i)}\left[ \frac{\alpha^n_{i_1}}{\Pi^{m_i}_{k=2}(\alpha_{i_1}-\alpha_{i_k})}+\frac{\alpha^n_{i_2}}{\Pi^{m_i}_{k=1,k\not=2}(\alpha_{i_2}-\alpha_{i_k})}\right.\nonumber\\
&&\qquad \quad \left. +\cdots+\frac{\alpha^n_{i_{m_i}}}{\Pi^{m_i}_{k=1,k\not= m_i}(\alpha_{i_{m_i}}-\alpha_{i_k})}\right] +\sum^r_{j=1, j\not= i}\frac{\alpha^n_j}{\Pi^r_{k=1,k\not= j} (\alpha_j-\alpha_k)}.\label{eq:5.7.7}
\en
It is obvious that the summation in the bracket of equation (\ref{eq:5.7.7}) is the expanded form of the divided difference of the function $f(t)=t^n$, denoted by $f[\alpha_{i_1},\alpha_{i_2},\ldots, \alpha_{i_{m_i}}]$, at nodes $\alpha_{i_1}, \alpha_{i_2},\ldots, \alpha_{i_{m_i}}$. Using the mean value theorem for divided difference, one may obtain 

\[
\lim_{(\alpha_{i_1}, \ldots, \alpha_{i_{m_i}})\to (\alpha_i, \ldots, \alpha_i)}f[\alpha_{i_1},\alpha_{i_2},\ldots, \alpha_{i_{m_i}}]=\frac{f^{(m_i-1)}(\alpha_i)}{(m_i-1)!}={n\choose {m_i-1}} \alpha^{n-m_i+1}_i.
\]
Therefore, by taking limit $\alpha_{i_j}\to \alpha_i$ for $1\leq j\leq m_i$ on the both sides of (\ref{eq:5.7.7}), we obtain 

\[
\lim_{(\alpha_{i_1}, \ldots, \alpha_{i_{m_i}})\to (\alpha_i, \ldots, \alpha_i)}\tilde F^{(r)}_n= \frac{{n\choose {m_i-1}}\alpha_i^{n-m_1+1}}{\Pi^{r-m_i+1}_{k=1,k\not= i}(\alpha_i-\alpha_k)}+ \sum^r_{j=1, j\not= i}\frac{\alpha^n_j}{\Pi^r_{k=1,k\not= j} (\alpha_j-\alpha_k)},
\]
which implies the correction of (\ref{eq:5.7}) for the multiple root $\alpha_i$ with multiplicity $m_i$. Therefore, (\ref{eq:5.7}) follows by taking the same process for each 
multiple root of the characteristic polynomial $P_r(t)$ shown in Proposition \ref{pro:3.1}.

Finally, we prove the sufficiency. For $n\geq r$, 

\bns
&&\sum^{r}_{i=1}p_{i}\tilde F^{( r)}_{n-i}=\sum^\ell_{j=1}\frac{ \sum^{r}_{i=1}p_{i}{n\choose m_j-1}\alpha^{n-i-m_j+1}_j}{\Pi^\ell_{k=1, k\not= j}(\alpha_j-\alpha_k)}\\
&=&\sum^\ell_{j=1}\frac{ {n\choose m_j-1}\alpha^{-m_j+1}_j\sum^{r}_{i=1}p_{i}\alpha^{n-i}_j}{\Pi^\ell_{k=1, k\not= j}(\alpha_j-\alpha_k)}\\
&=&\sum^\ell_{j=1}\frac{ {n\choose m_j-1}\alpha^{n-m_j+1}_j}{\Pi^\ell_{k=1, k\not= j}(\alpha_j-\alpha_k)}=\tilde F^{( r)}_{n}.
\ens
Therefore, sequence $\{ \tilde F^{( r)}_{n}\}$ satisfies linear recurrence relation (\ref{eq:1}) and is the IRS with respect to $\{p_j\}$.
\end{proof}
\eop

The IRS of a set of linear recurring sequences is a kind of basis so that every sequence in the set can be represented in terms of the IRS. For this purpose, we need to extend the indices of IRS $\tilde F^{(r)}_n$ of $A_r$ to the negative indices till $n=-r+1$, where $A_{r}$ be the set of all linear recurring sequences defined by the homogeneous linear recurrence relation (\ref{eq:1}) with coefficient set $E_{r}=\{ p_{1}, p_{2}, \ldots, p_r\}$, $p_r\not= 0$. For instance, $\tilde F^{(r)}_{-1}$ is defined by 

\[
\tilde F^{(r)}_{r-1}=p_1 \tilde F^{(r)}_{r-2} +p_2\tilde F^{(r)}_{r-3}+\cdots +p_r\tilde F^{(r)}_{-1},
\]
which implies $\tilde F^{(r)}_{-1}=1/p_r$ because $\tilde F^{(r )}_0=\cdots =\tilde F^{(r)}_{r-2}=0$ and $\tilde F^{(r)}_{r-1}=1$. Similarly, we may use 

\[
\tilde F^{(r)}_{r+j}=p_1\tilde F^{(r)}_{r+j-1}+p_2\tilde F^{(r)}_{r+j-2}+\cdots +p_r\tilde F^{(r)}_{j}
\]
to define $\tilde F^{(r)}_j$, $j=-2,\ldots, -r+1$ successively. The following theorem shows how to represent linear recurring sequences in terms of the extended IRS $\{ \tilde F^{(r)}_n\}_{n\geq -r+1}$. 

\begin{theorem}\label{thm:1.4}
Let $A_{r}$ be the set of all linear recurring sequences defined by the homogeneous linear recurrence relation (\ref{eq:1}) with coefficient set $E_{r}=\{ p_{1}, p_{2}, \ldots, p_r\}$ ($p_{r}\not= 0$), and let $\{\tilde F^{( r)}_{n}\}_{n\geq -r+1}$ be the (extended) IRS of $A_{r}$. Then, for any $a_n\in A_r$, there holds the expression of $a_n$ as 

\be\label{eq:5.8}
a_n=a_{r-1}\tilde F^{(r)}_n+\sum^{r-2}_{j=0} \sum^{r-2}_{k=j} a_kp_{r+j-k}\tilde F^{(r)}_{n-1-j}.
\ee
\end{theorem}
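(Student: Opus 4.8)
The plan is to recognize the right-hand side of (\ref{eq:5.8}) as a fixed linear combination of $r$ consecutive shifts of the extended IRS and to exploit that both sides satisfy the same recurrence (\ref{eq:1}). Writing $c_0=a_{r-1}$ and $d_j=\sum_{k=j}^{r-2}a_kp_{r+j-k}$ for $0\le j\le r-2$, the right-hand side is $b_n:=c_0\tilde F^{(r)}_n+\sum_{j=0}^{r-2}d_j\tilde F^{(r)}_{n-1-j}$, a combination of $\tilde F^{(r)}_n,\tilde F^{(r)}_{n-1},\ldots,\tilde F^{(r)}_{n-r+1}$. Since the extension was defined precisely so that (\ref{eq:1}) holds for the IRS at every index $\ge 1$, each shifted sequence $\{\tilde F^{(r)}_{n-c}\}_n$ satisfies (\ref{eq:1}) in $n$ once $n-c\ge 1$; as the largest shift is $c=r-1$, the sequence $\{b_n\}$ satisfies (\ref{eq:1}) for all $n\ge r$, exactly as $\{a_n\}$ does. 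Because a solution of the order-$r$ relation (\ref{eq:1}) is determined by any $r$ consecutive terms, it then suffices to prove $b_n=a_n$ for $n=0,1,\ldots,r-1$.

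For $n=r-1$ the check is immediate: using $\tilde F^{(r)}_m=0$ for $0\le m\le r-2$ and $\tilde F^{(r)}_{r-1}=1$, every $\tilde F^{(r)}_{r-2-j}$ in the double sum vanishes and the leading term contributes $a_{r-1}$, so $b_{r-1}=a_{r-1}$. For $0\le n_0\le r-2$ the term $c_0\tilde F^{(r)}_{n_0}$ vanishes; interchanging the order of summation gives $b_{n_0}=\sum_{k=n_0}^{r-2}a_k\bigl(\sum_{j=n_0}^{k}p_{r+j-k}\tilde F^{(r)}_{n_0-1-j}\bigr)$, where only the indices $j\ge n_0$ survive, because for $j<n_0$ the index $n_0-1-j$ lies in $\{0,\ldots,r-2\}$ (on which $\tilde F^{(r)}$ is zero) and it never reaches $r-1$. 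Thus everything reduces to the coefficient identity
\[
\sum_{s=1}^{i+1}p_{r-1-i+s}\,\tilde F^{(r)}_{-s}=\delta_{i,0},\qquad i=k-n_0\in\{0,1,\ldots,r-2\},
\]
obtained after the substitution $s=j-n_0+1$; granting it, the coefficient of $a_k$ is $\delta_{k,n_0}$ and hence $b_{n_0}=a_{n_0}$.

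The crux is to prove this coefficient identity, and the point is that it is nothing but the defining recurrence (\ref{eq:1}) for the IRS read at the low index $m=r-1-i$. Indeed, since $1\le r-1-i\le r-1$, relation (\ref{eq:1}) applies and gives $\tilde F^{(r)}_{r-1-i}=\sum_{j'=1}^{r}p_{j'}\tilde F^{(r)}_{r-1-i-j'}$; the left side equals $\delta_{i,0}$, while on the right every term with a nonnegative index lies in $\{0,\ldots,r-2\}$ and so vanishes, leaving precisely $\sum_{s=1}^{i+1}p_{r-1-i+s}\tilde F^{(r)}_{-s}$ after setting $s=j'-(r-1-i)$. This is the main obstacle only in the sense of bookkeeping: one must track the index ranges carefully and recognize that the needed identity is exactly (\ref{eq:1}) applied at the indices $r-1,r-2,\ldots,1$ that the negative-index extension was built to satisfy, so that no explicit formula for $\tilde F^{(r)}_{-s}$ is ever required. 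Assembling the cases $n_0=r-1$ and $0\le n_0\le r-2$ with the reduction of the first paragraph yields (\ref{eq:5.8}).
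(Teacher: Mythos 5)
Your proposal is correct and follows essentially the same route as the paper: both reduce the claim to checking the initial vector $(a_0,\ldots,a_{r-1})$ (since both sides satisfy the same order-$r$ recurrence), and both verify that the coefficient of $a_k$ equals $\delta_{k,n}$ by combining the vanishing of $\tilde F^{(r)}_m$ for $0\le m\le r-2$ with the defining recurrence applied at the low indices $1,\ldots,r-1$ where the negative-index extension was constructed. Your reorganization of the case analysis (discarding the $j<n_0$ terms and then reading the surviving identity directly as the recurrence at $m=r-1-i$) is just a cleaner bookkeeping of the paper's three cases $n=k$, $n>k$, $n<k$.
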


\begin{proof}
Considering the sequences on the both sides of (\ref{eq:5.8}), we find that both satisfy the same recurrence relation (\ref{eq:1}). Hence, to prove the equivalence of two sequences, we only need to show that they have the same initials, or have the same initial vector $(a_0, a_1, \ldots, a_{r-1})$ because of 
the uniqueness of the linear recurring sequence defined by (\ref{eq:1}) with fixed initials. First, for $n=r-1$, the conditions $\tilde F^{(r )}_0=\cdots =\tilde F^{(r)}_{r-2}=0$ and $\tilde F^{(r)}_{r-1}=1$ are applied on the right-hand side of (\ref{eq:5.8}) to obtain its value as $a_{r-1}$, which shows (\ref{eq:5.8}) holds for $n=r-1$. Secondly, for all $0\leq n\leq r-2$, one may write the right-hand side RHS of (\ref{eq:5.8}) as 

\[
RHS=0+\sum^{r-2}_{k=0} a_k \left( \sum^{k}_{j=0} p_{r+j-k}\tilde F^{(r)}_{n-1-j}\right)
=\sum^{r-2}_{k=0}a_{k}\delta_{k,n}=a_{n},
\]
where the Kronecker delta symbol $\delta_{k,n}$ equals to $1$ when $k=n$ and zero otherwise, and the following formula is used:

\be\label{eq:5.8.2}
\sum^{k}_{j=0} p_{r+j-k}\tilde F^{( r)}_{n-1-j}=\delta_{k,n}.
\ee
(\ref{eq:5.8.2}) can be proved by splitting it into three cases: $n=, >,$ and $< k$, respectively. If $n=k$, we have 

\bns
&&\sum^{k}_{j=0} p_{r+j-k}\tilde F^{( r)}_{n-1-j}=\sum^{k}_{j=0} p_{r+j-k}\tilde F^{( r)}_{k-1-j}\\
&=&p_{r}\tilde F^{( r)}_{-1}+p_{r-1}\tilde F^{( r)}_{0}+p_{r-2}\tilde F^{( r)}_{1}+\cdots +p_{r-k}\tilde F^{( r)}_{k-1}\\
&=&p_{r}\tilde F^{( r)}_{-1}=1,
\ens
where we apply use $\tilde F^{( r)}_{-1}=1/p_{r}$ and the fact $\tilde F^{( r)}_{j}=0$ for all $j=0,1,\ldots, k-1$ due to $k-1\leq r-3$ and the initial values of $\tilde F^{( r)}_{n}$ being zero for all $0\leq n\leq r-2$. 

If $n>k$, we have 

\[
\sum^{k}_{j=0} p_{r+j-k}\tilde F^{( r)}_{n-1-j}=p_{r}\tilde F^{( r)}_{n-k-1}+p_{r-1}\tilde F^{( r)}_{n-k}+\cdots+p_{r-k}\tilde F^{( r)}_{n-1}=0
\]
because $\tilde F^{( r)}_{j}=0$ for all $j=n-k-1, n-k, \ldots, n-1$ due to $-1<n-k-1\leq j \leq n-1\leq r-3$ and the definition of $\tilde F^{( r)}_{n}$.

Finally, for $0\leq n<k$, we may find 

\bns
&&\sum^{k}_{j=0} p_{r+j-k}\tilde F^{( r)}_{n-1-j}=p_{r}\tilde F^{( r)}_{n-k-1}+p_{r-1}\tilde F^{( r)}_{n-k}+\cdots+p_{r-k}\tilde F^{( r)}_{n-1}\\
&=&p_{r}\tilde F^{( r)}_{n-k-1}
+\cdots+p_{r-k}\tilde F^{( r)}_{n-1}+p_{r-k-1}\tilde F^{( r)}_{n}+p_{r-k-2}\tilde F^{( r)}_{n+1}
+\cdots +p_{1}\tilde F^{( r)}_{n+r-k-2}\\
&=&\tilde F^{( r)}_{n+r-k-1}=0,
\ens
where the inserted $r-k-1$ terms $\tilde F^{( r)}_{j}$, $j=n,n+1,\ldots n+r-k-2$, are zero due to the definition of $\tilde F^{( r)}_{n}$ and $0\leq n \leq j\leq  n+r-k-2 <r-2$. The last two steps of the above equations are from the recurrence relation (\ref{eq:1}) and the assumptions of $0\leq k\leq r-2$ and $1\leq n+1\leq n+r-k-1< r-1$, respectively. This completes the proof of theorem. 
\end{proof}
\eop

From Theorems \ref{thm:1.3} and \ref{thm:1.4}, we immediately obtain  

\begin{corollary}\label{cor:1.5}
Let $A_{r}$ be the set of all linear recurring sequences defined by the homogeneous linear recurrence relation (\ref{eq:1}) with coefficient set $E_{r}=\{ p_{1}, p_{2}, \ldots, p_r\}$ ($p_{r}\not= 0$), and let $\alpha_1,\alpha_2,\ldots, \alpha_\ell$ be the distinct real or complex roots of characteristic polynomial,  $p(t)=t^r-p_1t^{r-1}-\cdots -p_r$, of the recurrence relation (\ref{eq:1}) with the multiplicities $m_1,m_2,\ldots, m_\ell$, respectively, and $m_1+m_2+\cdots +m_\ell=r$. Then, for any $\{a_{n}\} \in A_{r}$, there holds 

\be\label{eq:5.8.3}
a_n=a_{r-1}\sum^\ell_{j=1}\frac{ {n\choose m_j-1}\alpha^{n-m_j+1}_j}{\Pi^\ell_{k=1, k\not= j}(\alpha_j-\alpha_k)}
+\sum^{r-2}_{j=0} \sum^{r-2}_{k=j} \sum^\ell_{j=1}a_kp_{r+j-k}\frac{ {n-j-1\choose m_j-1}\alpha^{n-j-m_j}_j}{\Pi^\ell_{k=1, k\not= j}(\alpha_j-\alpha_k)}
\ee
\end{corollary}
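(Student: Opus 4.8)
The plan is to treat Corollary~\ref{cor:1.5} as a purely formal consequence of the two preceding theorems, with no new combinatorial input. Theorem~\ref{thm:1.4} already writes an arbitrary $\{a_n\}\in A_r$ in terms of the extended IRS through the representation (\ref{eq:5.8}), while Theorem~\ref{thm:1.3} supplies the explicit closed form (\ref{eq:5.7}) for the IRS itself. Thus the entire task is to insert (\ref{eq:5.7}) into (\ref{eq:5.8}) and carry out the single index shift that the second term of (\ref{eq:5.8}) demands.

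Concretely, I would first take the leading term $a_{r-1}\tilde F^{(r)}_n$ of (\ref{eq:5.8}) and replace $\tilde F^{(r)}_n$ verbatim by the right-hand side of (\ref{eq:5.7}); this reproduces the first summand of (\ref{eq:5.8.3}) unchanged. For the double sum $\sum_{j=0}^{r-2}\sum_{k=j}^{r-2} a_k p_{r+j-k}\,\tilde F^{(r)}_{n-1-j}$ I would substitute (\ref{eq:5.7}) with $n$ replaced by $n-1-j$. Under this shift the exponent becomes $(n-1-j)-m_i+1 = n-j-m_i$ and the binomial coefficient becomes $\binom{n-1-j}{m_i-1}$, which are exactly the factors displayed in the second summand of (\ref{eq:5.8.3}); the product $\Pi_{k\neq i}(\alpha_i-\alpha_k)$ in the denominator is unaffected by the shift. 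Here I would be careful to use a summation letter for the roots (say $i$) distinct from the outer shift index $j$ and the middle index $k$, and only at the end relabel to match the display in (\ref{eq:5.8.3}); collecting the two pieces then gives the claimed identity.

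The only step that needs genuine justification is that the closed form (\ref{eq:5.7}), which Theorem~\ref{thm:1.3} asserts for $n\ge r$, may legitimately be evaluated at the shifted arguments $n-1-j$, which for $0\le j\le r-2$ can drop below $r$ and into the extended (possibly negative) index range introduced before Theorem~\ref{thm:1.4}. I would dispatch this by the same principle used throughout: the right-hand side of (\ref{eq:5.7}), regarded as a function of the integer argument, satisfies the recurrence (\ref{eq:1}) identically---this is precisely the content of the sufficiency computation in the proof of Theorem~\ref{thm:1.3}---and it agrees with the extended IRS on a block of $r$ consecutive indices, so by the uniqueness of sequences satisfying (\ref{eq:1}) the two coincide at every integer, in particular at $n-1-j$. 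I expect no real obstacle beyond keeping this extension remark and the index bookkeeping straight; the corollary is then immediate.
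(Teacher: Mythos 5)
Your proposal is correct and matches the paper's intent exactly: the paper offers no separate argument for Corollary~\ref{cor:1.5}, stating only that it follows immediately from Theorems~\ref{thm:1.3} and~\ref{thm:1.4}, and the substitution of (\ref{eq:5.7}) into (\ref{eq:5.8}) with the shift $n\mapsto n-1-j$ is precisely that deduction. Your two added observations---that the root-summation index must be kept distinct from the outer indices $j,k$ (which the paper's display in fact conflates), and that the closed form extends to the shifted, possibly sub-$r$ arguments via the recurrence and uniqueness---are sound and only make the ``immediate'' step explicit.
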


We now establish the relationship between a sequence in $A_r$ and the IRS of $A_r$ by using Toeplitz matrices.  A Toeplitz matrix may be defined as a $n\times n$ matrix $A$ where its entries $a_{i,j} = c_{i-j}$, for constants $c_{1-n}$, $\ldots, c_{n-1}$. A $m\times n$ block Toeplitz matrix is a matrix that can be partitioned into $m\times n$ blocks and every block  is a Toeplotz matrix. 

\begin{theorem}\label{thm:1.6}
Let $A_{r}$ be the set of all linear recurring sequences defined by the homogeneous linear recurrence relation (\ref{eq:1}) with coefficient set $E_{r}=\{ p_{1}, p_{2}, \ldots, p_r\}$, and let $\tilde F^{( r)}_{n}$ be the IRS of $A_{r}$. Then, 
for even $r$, we may write 

\be\label{eq:5.9}
\tilde F^{(r)}_n=\sum^{r-1}_{j=r/2}c_j a_{n+r-j} +\sum^{(3r/2)-1}_{j=r}c_j a_{n+r-j-1},
\ee
where coefficients $c_j$ ($r/2\leq j\leq (3r/2)-1$) satisfy 

\bn\label{eq:5.10}
&&A_{e}{\bf c}\nonumber\\
&&:=\left[ \begin{array} {llllllll} a_{r/2}& a_{(r/2)-1}&\cdots &a_1& a_{-1}& a_{-2}&\cdots &a_{-r/2}\\
a_{(r/2)+1}& a_{r/2}&\cdots &a_2& a_0& a_{-1}&\cdots &a_{-(r/2)+1}\\
\vdots& \vdots& \cdots&\vdots&\vdots& \vdots&\cdots&\vdots\\
a_{r-1}& a_{r-2}&\cdots&a_{r/2}&a_{(r/2)-2}&a_{(r/2)-3}&\cdots& a_{-1}\\
a_{r}&a_{r-1}&\cdots& a_{(r/2)+1}&a_{(r/2)-1}&a_{(r/2)-2}&\cdots &a_0\\
\vdots& \vdots& \cdots&\vdots&\vdots& \vdots&\cdots&\vdots\\
a_{(3r/2)-1}&a_{(3r/2)-2}&\cdots &a_{r}& a_{r-2}& a_{r-3}&\cdots&a_{(r/2)-1}
\end{array}\right]{\bf c} ={\bf e_r}\nonumber\\
\en
with ${\bf c}=(c_{r/2}, c_{(r/2)-1}, \ldots, c_r, c_{r+1}, c_{r+2},\ldots, c_{(3r/2)-1})^T$ and 
${\bf e_r}=(0,0,\ldots, 0,1)^T\in {\bR}^r$. For odd $r$, there holds 

\be\label{eq:5.11}
\tilde F^{(r)}_n=\sum^{3(r-1)/2}_{j=(r-1)/2}c_j a_{n+r-j-1},
\ee
where $c_j$ ($(r-1)/2\leq j\leq 3(r-1)/2$) satisfying 

\bn\label{eq:5.12}
&&A_{o}{\bf c}\nonumber\\
&:=&\left[ \begin{array} {llllllll} a_{(r-1)/2}& a_{((r-1)/2)-1}&\cdots 
&a_1&a_0&a_{-1}&\cdots &a_{-(r-1)/2}\\
a_{((r-1)/2)+1} &a_{(r-1)/2}& \cdots &a_2&a_1&a_0&\cdots&a_{-(r-1)/2+1}\\
\vdots&\vdots& \cdots& \vdots& \vdots& \vdots& \cdots&\vdots\\
a_{(3(r-1)/2)}&a_{(3(r-1)/2)-1}&\cdots &a_r&a_{r-1}&a_{r-2}&\cdots& a_{(r-1)/2}
\end{array}\right]{\bf c} ={\bf e_r}\nonumber\\
&&
\en
with ${\bf c}=(c_{(r-1)/2}, c_{((r-1)/2)-1}, \ldots, c_{r-1}, c_{r}, c_{r+1},\ldots, c_{(3(r-1)/2)})^T$ and ${\bf e_r}=(0,0,\ldots, 0,1)^T\in {\bR}^r$. If the $2\times 2$ block Toeplitz matrix $A_{e}$ and Toeplitz matrix $A_{o}$ are invertible, then we have unique expressions (\ref{eq:5.9}) and (\ref{eq:5.11}), respectively. 
\end{theorem}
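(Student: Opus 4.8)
The plan is to exploit the same uniqueness principle that drove the proof of Theorem~\ref{thm:1.4}. Since $p_r\neq 0$, the recurrence (\ref{eq:1}) can be run backwards, so every $\{a_n\}\in A_r$ extends uniquely to a two-sided sequence $\{a_n\}_{n\in\bZ}$ satisfying (\ref{eq:1}) for all $n$ (this is exactly the backward extension already used to define $\tilde F^{(r)}_{-1}=1/p_r$ and the negative-index values), and any such sequence is determined by $r$ consecutive terms. The key structural observation is shift-invariance: for each fixed integer $s$ the shifted sequence $\{a_{n+s}\}_n$ again satisfies (\ref{eq:1}). Hence the right-hand side of (\ref{eq:5.9}) (respectively (\ref{eq:5.11})), being a finite linear combination of such shifts, is itself a solution of (\ref{eq:1}) in the variable $n$. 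As $\{\tilde F^{(r)}_n\}$ is also a solution, to prove the identity for all $n$ it suffices to verify it on any block of $r$ consecutive indices.

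I would take the block $n=0,1,\ldots,r-1$, where the left-hand side equals the prescribed IRS values $\tilde F^{(r)}_0=\cdots=\tilde F^{(r)}_{r-2}=0$ and $\tilde F^{(r)}_{r-1}=1$; this produces precisely the target vector $\mathbf{e_r}=(0,\ldots,0,1)^T$. The next step is the index bookkeeping identifying these $r$ matching equations with the system $A_e\mathbf{c}=\mathbf{e_r}$ (even $r$) or $A_o\mathbf{c}=\mathbf{e_r}$ (odd $r$). For even $r$ the representation uses the $r$ shifts with offsets $1,2,\ldots,r/2$ (first sum) and $-1,-2,\ldots,-r/2$ (second sum), a window symmetric about $n$ that omits the central offset $0$; substituting $n=0,\ldots,r-1$ reads off the coefficient of $c_j$ in the equation for index $n$ as the value $a_{n+r-j}$ or $a_{n+r-j-1}$, and I would check that as $n$ ranges over $0,\ldots,r-1$ these entries fill out exactly the two Toeplitz blocks in (\ref{eq:5.10}). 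For odd $r$ the window of offsets $-(r-1)/2,\ldots,(r-1)/2$ is contiguous (it includes $0$), so the same substitution yields the single Toeplitz matrix $A_o$ of (\ref{eq:5.12}).

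With the equivalence ``(\ref{eq:5.9}) holds for all $n$'' $\Longleftrightarrow$ ``$\mathbf{c}$ solves $A_e\mathbf{c}=\mathbf{e_r}$'' established (and likewise for the odd case via (\ref{eq:5.11})), the conclusion is immediate: if $A_e$ (respectively $A_o$) is invertible then $\mathbf{c}=A_e^{-1}\mathbf{e_r}$ exists and is unique, which gives the unique representation claimed.

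I expect the main obstacle to be purely clerical: confirming that the two offset ranges in the sums translate, under $n\mapsto 0,1,\ldots,r-1$, into exactly the Toeplitz and $2\times2$ block-Toeplitz patterns written in (\ref{eq:5.10}) and (\ref{eq:5.12}), including the correct placement of the negative-index entries $a_{-1},\ldots,a_{-r/2}$ supplied by the backward extension. The conceptual content---shift-invariance together with uniqueness, reducing an identity in $n$ to a finite linear system---is the same mechanism as in Theorem~\ref{thm:1.4} and presents no real difficulty; only the bookkeeping that separates the even case (symmetric window skipping $0$, hence two blocks) from the odd case (contiguous window, one block) requires genuine care.
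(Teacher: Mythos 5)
Your proposal is correct and follows essentially the same route as the paper: both arguments use shift-invariance of solutions of (\ref{eq:1}) so that the right-hand sides of (\ref{eq:5.9}) and (\ref{eq:5.11}) again satisfy the recurrence, reduce the identity to matching $r$ consecutive initial values against $(0,\ldots,0,1)^T$ (using the backward extension to negative indices), and read off the resulting linear system whose invertibility gives uniqueness. Your index bookkeeping for the even and odd windows matches the Toeplitz structures in (\ref{eq:5.10}) and (\ref{eq:5.12}) exactly as the paper intends.
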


\begin{proof} 
Since the operator $L: {\bZ}\times{\bZ}\mapsto {\bZ}$, $L(a_{n-1}, a_{n-2}, \ldots, a_{n-r}):=p_{1}a_{n-1}+p_{2}a_{n-2}+\cdots +p_ra_{n-r}=a_{n}$, is linear, sequence $\{a_{n}\}$ is uniquely determined by $L$ from a  given initial vector ($a_{0}$, $a_{1}, \ldots, a_{r-1}$). We may extend $\{ a_n\}_{n\geq 0}$ to a sequence with negative indices using the technique we applied before Theorem \ref{thm:1.4}. For instance, by defining $a_{-1}$ from $a_{r-1}=p_r a_{-1}+p_{r-1} a_0+p_{r-2} a_1+\cdots +p_1 a_{r-2})$, we obtain

\[
a_{-1}=\frac{1}{p_r}(a_{r-1}-p_{r-1} a_0-p_{r-2} a_1-\cdots -p_1 a_{r-2}), \quad p_r\not= 0. 
\]
Thus, the initial vector $(a_{-1}, a_{0}, \ldots, a_{r-2})$ generates $\{ a_n\}_{n\geq -1}$ or $\{a_{n-1}\}_{n\geq 0}$ by using the operator $L$. 
Since both sequences $\{a_n\}$ and $\{\tilde F^{(r )}_n\}$ satisfy the recurrence relation (\ref{eq:1}), they are generated by the same operator $L$. Hence, for even $r$, we may write $\tilde F^{(r)}_n$ as a linear combination of $a_{n+r-j}$ ($r/2\leq j\leq r-1$) and $a_{n+r-j-1}$ ($r\leq j\leq (3r/2)-1$) shown in (\ref{eq:5.9}) provided the initials of the sequences on the two sides are the same. Here, we mention again that the initials with negative indices have been defined by using (\ref{eq:1}).  Therefore, we substitute the initials of $\{ a_n\}$ with their negative extensions to the right-hand side of (\ref{eq:5.9}) and enforce the resulting linear combination equal to the initials of the left-hand side of (\ref{eq:5.9}), i.e., ${\bf e_r}$, the $r$th unit vector of ${\bR}^r$, which derives (\ref{eq:5.10}). Under the assumption of the invertibility of $A_{e}$, Toeplitz system (\ref{eq:5.10}) has a unique solution of ${\bf c}=(c_{r/2}, c_{(r/2)-1}, \ldots, c_r, c_{r+1}, c_{r+2},\ldots, $ $c_{(3r/2)-1})^T$, which shows the uniqueness of the expression of (\ref{eq:5.9}). (\ref{eq:5.11}) can be proved similarly. 
\end{proof}
\eop

\noindent{\bf Remark 2.1} It can be seen that not all Toeplitz matrix generated from the linear recurring sequence (\ref{eq:1}) is invertible. For example, consider
the sequence $\{ a_{n}\}$ generated by the linear recurrence relation $a_{n}=a_{n-1}+3a_{n-2}+a_{n-3}$ with $a_{0}=1$, $a_{1}=0$, $a_{2}=1$, there hold $a_{3}=a_{0}+3a_{1}+a_{2}=2$ and $a_{-1}=a_{2}-a_{1}-3a_{0}=-2$. From (\ref{eq:5.11}) 

\[
A_{o}=\left[ \begin{array}{lll}0 &1&2\\1&0&1\\-2&1&0\end{array}\right]
\]
which is not invertible. 

If $p_j=1$ ($1\leq j\leq r$), the IRS is the high order Fibonacci sequence. For instance, $\tilde F^{(3)}_n$, $\tilde F^{(4)}_n$, $\tilde F^{(5)}_n$, $\tilde F^{(6)}_n$, $\tilde F^{(7)}_n$, etc. are Tribonacci numbers (\seqnum{ A000073}), Tetranacci numbers (\seqnum{ A000078}), Pentanacci numbers (\seqnum{ A001591}), Heptanacci numbers (\seqnum{ A122189}), Octanacci numbers (\seqnum{ A079262}), etc., respectively. 

\section{Examples with respect to $E_2$ and $E_3$}

We now give some examples of IRS for particular $r$ starting from $r=2$. Let $A_{2}$ be the set of all linear recurring sequences defined by the homogeneous linear recurrence relation (\ref{eq:5.1}) with coefficient set $E_{2}=\{ p_{1}, p_{2}\}$. Hence, from Theorems \ref{thm:1.3} and \ref{thm:1.4} and the definition of the impulse response sequence of $A_{2}$ with respect to $E_{2}$, we obtain 

\begin{corollary}\label{cor:5.2}
Let $A_{2}$ be the set of all linear recurring sequences defined by the homogeneous linear recurrence relation (\ref{eq:5.1}) with coefficient set $E_{2}=\{ p_{1}, p_{2}\}$, and let $\{\tilde F^{(2)}_{n}\}$ be the IRS 
of $A_{2}$. Suppose $\alpha$ and $\beta$ are two roots of the characteristic polynomial of $A_{2}$, which do not need to be distinct. Then 

\be\label{eq:5.2}
\tilde F^{(2)}_{n}=\left\{ \begin{array}{ll} \frac{\alpha^{n}-\beta^{n}}{\alpha -\beta}, & if\,\, \alpha\not= \beta;\\
n \alpha^{n-1}, &if\,\, \alpha =\beta.\end{array}\right.
\ee
In addition, every $\{ a_{n}\}\in A_{2}$ can be written as 

\be\label{eq:5.3}
a_{n}=a_{1}\tilde F^{(2)}_{n}-\alpha \beta a_{0}\tilde F^{(2)}_{n-1},
\ee
and $a_{n}$ reduces to $a_{1}\tilde F^{(2)}_{n}-\alpha^{2}a_{0}\tilde F^{(2)}_{n-1}$ when $\alpha=\beta$.

Conversely, there holds a expression of $\tilde F^{(2)}_n$ in terms of $\{a_n\}$ as 

\be\label{eq:5.4}
\tilde F^{(2)}_n= c_1 a_{n+1}+c_2 a_{n-1},
\ee
where 

\be\label{eq:5.5}
c_1= \frac{a_{1}-a_{0}p_{1}}{p_{1}(a^{2}_{1}-a_{0}a_{1}p_{1}-a^{2}_{0}p_{2})},\quad c_2= -\frac{a_{1}p_2}{p_{1}(a^{2}_{1}-a_{0}a_{1}p_{1}-a^{2}_{0}p_{2})},
\ee
provided that $p_1\not= 0$, and $a^{2}_{1}-a_{0}a_{1}p_{1}-a^{2}_{0}p_{2}\not= 0$. 
\end{corollary}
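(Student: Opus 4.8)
The plan is to obtain all four assertions as specializations of the general-order results already established, supplying only the small amount of $r=2$ bookkeeping and one explicit linear-algebra computation. Throughout I will use Vieta's relations for the characteristic polynomial $p(t)=t^2-p_1t-p_2$, namely $\alpha+\beta=p_1$ and $\alpha\beta=-p_2$.

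First I would derive (\ref{eq:5.2}) directly from Theorem \ref{thm:1.3}. When $\alpha\neq\beta$ both roots have multiplicity one, so formula (\ref{eq:5.7.2}) with $r=2$ reads $\tilde F^{(2)}_n=\alpha^n/(\alpha-\beta)+\beta^n/(\beta-\alpha)=(\alpha^n-\beta^n)/(\alpha-\beta)$. When $\alpha=\beta$ there is a single root ($\ell=1$, $m_1=2$), and (\ref{eq:5.7}) collapses to the single term $\binom{n}{1}\alpha^{n-1}=n\alpha^{n-1}$. Next I would read off (\ref{eq:5.3}) from Theorem \ref{thm:1.4}: setting $r=2$ in (\ref{eq:5.8}) the double sum degenerates to the single index $j=k=0$, giving $a_n=a_1\tilde F^{(2)}_n+a_0p_2\tilde F^{(2)}_{n-1}$, and substituting $p_2=-\alpha\beta$ yields (\ref{eq:5.3}); the $\alpha=\beta$ case is then immediate.

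The converse expression (\ref{eq:5.4}) is the $r=2$ instance of Theorem \ref{thm:1.6}: with $r/2=1$ the two sums in (\ref{eq:5.9}) contribute exactly the terms $c_1a_{n+1}$ and $c_2a_{n-1}$. To pin down the coefficients I would not invoke the general Toeplitz system (\ref{eq:5.10}) verbatim, but instead impose (\ref{eq:5.4}) at $n=0$ and $n=1$, using $\tilde F^{(2)}_0=0$ and $\tilde F^{(2)}_1=1$. This requires the backward and forward extensions $a_{-1}=(a_1-p_1a_0)/p_2$ and $a_2=p_1a_1+p_2a_0$ obtained from the recurrence, and produces the linear system
\[
\begin{pmatrix} a_1p_2 & a_1-p_1a_0\\ p_1a_1+p_2a_0 & a_0\end{pmatrix}\begin{pmatrix}c_1\\ c_2\end{pmatrix}=\begin{pmatrix}0\\ 1\end{pmatrix},
\]
after which Cramer's rule delivers (\ref{eq:5.5}).

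The one computation carrying real content is the determinant of the $2\times2$ matrix above, which I expect to be the main (though still routine) obstacle: after expansion the cross terms $a_0a_1p_2$ cancel and the remainder factors as $-p_1(a_1^2-a_0a_1p_1-a_0^2p_2)$. This factored form is exactly what appears in the denominators of (\ref{eq:5.5}), so its nonvanishing---together with $p_1\neq0$---is precisely the invertibility hypothesis under which (\ref{eq:5.4}) holds uniquely. Keeping the algebra organized so that the spurious terms cancel cleanly is the only place where genuine care is needed.
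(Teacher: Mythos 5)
Your proposal is correct and follows essentially the same route as the paper: (\ref{eq:5.2}) and (\ref{eq:5.3}) are read off from Theorems \ref{thm:1.3} and \ref{thm:1.4} with $r=2$ and Vieta's relation $p_2=-\alpha\beta$, and the coefficients in (\ref{eq:5.4}) are determined by matching the two initial values, which is exactly the paper's system (\ref{eq:5.6}) with the first equation scaled by $p_2$. Your explicit determinant computation $-p_1(a_1^2-a_0a_1p_1-a_0^2p_2)$ and the Cramer's-rule solution agree with (\ref{eq:5.5}).
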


\begin{proof}
(\ref{eq:5.2}) is a special case of (\ref{eq:5.7}) and (\ref{eq:5.7.2}) and can be found from (\ref{eq:5.7}) and (\ref{eq:5.7.2}) by using the substitution $r=2$, $a_0=0$ and $a_1=1$. Again from (\ref{eq:5.8}) and (\ref{eq:5.8.3})  

\bn\label{eq:5.3-0}
a_n&=&a_{1}\tilde F_{n}+ a_{0}p_{2}\tilde F_{n-1}=a_{1}\tilde F_{n}-\alpha \beta a_{0}\tilde F_{n-1}\nonumber\\
&=&\left\{ \begin{array}{ll} a_1 \frac{\alpha^{n}-\beta^{n}}{\alpha -\beta} -\alpha \beta a_0\frac{\alpha^{n-1}-\beta^{n-1}}{\alpha -\beta}, & if\,\, \alpha\not= \beta;\\
a_1(n\alpha^{n-1})-\alpha^2 ((n-1)\alpha^{n-2}), &if \,\, \alpha =\beta \end{array}.\right.\\
\en

From (\ref{eq:5.9}) and (\ref{eq:5.10}), we now prove (\ref{eq:5.4}) and (\ref{eq:5.5}), respectively. Denote by $L: {\bZ}\times{\bZ}\mapsto {\bZ}$ the operator $L(a_{n-1}, a_{n-2}):=p_{1}a_{n-1}+p_{2}a_{n-2}=a_{n}$. As what we presented before that $L$ is linear, and the  sequence $\{a_{n}\}$ is uniquely determined by $L$ from a given initial vector ($a_{0}$, $a_{1}$). Define $a_{-1}=(a_{1}-p_{1}a_{0})/p_{2}$, then $(a_{-1}, a_{0})$ is the initial vector that generates $\{ a_{n-1}\}_{n\geq 0}$ by $L$. Similarly, the vector $(a_{1}, p_{1}a_{1}+p_{2}a_{0})$ generates sequence $\{ a_{n+1}\}_{n\geq }$ by using $L$. Note the initial vectors of $\tilde F^{(2)}_{n}$ is $(0,1)$. Thus (\ref{eq:5.4}) holds if and only if the initial vectors on the two sides are equal:

\be\label{eq:5.6}
(0,1)=c_1 (a_{1}, p_{1}a_{1}+p_{2}a_{0})+c_2\left( \frac{a_{1}-p_{1}a_{0}}{p_{2}}, a_{0}\right),
\ee
which yields the solutions (\ref{eq:5.5}) for $c_1$ and $c_2$ and completes the proof of the corollary.
\end{proof}
\eop

\medbreak
\noindent{\bf Remark 3.1} Recall that \cite{HS} 
presented the following result. 

\begin{proposition}\label{pro:5.1}\cite{HS}
Let $\{ a_n\}$ be a sequence of order $2$ satisfying linear recurrence relation (\ref{eq:5.1}), and let $\alpha$ and $\beta$ be two roots of the characteristic polynomial $x^{2}-p_{1} x-p_{2}=0$ of the relation (\ref{eq:5.1}). Then 

\be\label{eq:5.1-2}
a_n=\left\{ \begin{array}{ll} \left( \frac{a_1-\beta a_0}{\alpha-\beta}\right) \alpha^n- \left(\frac{a_1-\alpha a_0}{\alpha-\beta}\right) \beta^n, & if\,\, \alpha\not= \beta;\\
na_1 \alpha^{n-1}-(n-1)a_0\alpha^{n}, &if\,\, \alpha =\beta.\end{array}\right.
\ee
\end{proposition}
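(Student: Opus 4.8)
The plan is to obtain Proposition \ref{pro:5.1} as an immediate consequence of Corollary \ref{cor:5.2}, which already contains all the substantive work. The representation (\ref{eq:5.3}) states that every $\{a_n\}\in A_2$ satisfies $a_n = a_1\tilde F^{(2)}_n - \alpha\beta a_0\tilde F^{(2)}_{n-1}$, and (\ref{eq:5.2}) supplies the closed form of the IRS $\tilde F^{(2)}_n$ in both the distinct-root and the repeated-root regimes. Thus the entire content of Proposition \ref{pro:5.1} reduces to substituting (\ref{eq:5.2}) into (\ref{eq:5.3}) and collecting terms, so I would not reprove it from scratch via the reduction-of-order method of \cite{HS} but rather read it off the IRS machinery.

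First I would treat the case $\alpha\neq\beta$. Writing $\tilde F^{(2)}_n = (\alpha^n-\beta^n)/(\alpha-\beta)$ and $\tilde F^{(2)}_{n-1} = (\alpha^{n-1}-\beta^{n-1})/(\alpha-\beta)$, the representation (\ref{eq:5.3}) becomes a single fraction over $\alpha-\beta$. The key algebraic move is to absorb the factor $\alpha\beta$ back into the powers, using $\alpha\beta\cdot\alpha^{n-1} = \beta\alpha^n$ and $\alpha\beta\cdot\beta^{n-1} = \alpha\beta^n$. After this the coefficient of $\alpha^n$ in the numerator collapses to $a_1-\beta a_0$ and that of $\beta^n$ to $-(a_1-\alpha a_0)$, which is exactly the first line of (\ref{eq:5.1-2}).

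For the repeated root $\alpha=\beta$ I would substitute $\tilde F^{(2)}_n = n\alpha^{n-1}$ and $\tilde F^{(2)}_{n-1} = (n-1)\alpha^{n-2}$ together with $\alpha\beta=\alpha^2$, so that (\ref{eq:5.3}) gives $a_n = na_1\alpha^{n-1} - \alpha^2 a_0(n-1)\alpha^{n-2} = na_1\alpha^{n-1} - (n-1)a_0\alpha^n$, the second line of (\ref{eq:5.1-2}). Since the argument is a pure substitution, there is no deep obstacle; the only point demanding care is the bookkeeping with Vieta's relation $\alpha\beta=-p_2$ and the sign it carries into (\ref{eq:5.3}), since a single dropped sign would corrupt both cases.

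If instead a self-contained proof were wanted, bypassing Corollary \ref{cor:5.2}, I would verify (\ref{eq:5.1-2}) directly by checking that its right-hand side satisfies the recurrence (\ref{eq:5.1}) — using $\alpha^2 = p_1\alpha+p_2$ and the analogous identity for $\beta$ — and that it reproduces $a_0$ and $a_1$ at $n=0,1$; uniqueness of order-$2$ recurring sequences with prescribed initials then closes the argument. In that route the hardest step is the repeated-root line, where one must justify that $n\alpha^{n-1}$ solves the recurrence when $\alpha$ is a double root, either by the limiting process already used in Theorem \ref{thm:1.3} or by a direct differentiation-of-$\alpha^n$ check.
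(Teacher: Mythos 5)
Your proposal is correct and takes essentially the same route as the paper: the paper recalls this proposition from \cite{HS} and observes only that (\ref{eq:5.1-2}) ``can be derived from (\ref{eq:5.3-0})'', i.e.\ by substituting the closed forms (\ref{eq:5.2}) into the representation $a_n=a_1\tilde F^{(2)}_n-\alpha\beta a_0\tilde F^{(2)}_{n-1}$ and simplifying, which is exactly your main argument (your sign bookkeeping via $p_2=-\alpha\beta$ is also the correct and necessary step).
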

\cite{HS} also shows a method of finding the expressions of the linear recurring sequences of order $2$ and the interrelationship among those sequences. However, \cite{HS} also pointed out ``the method presented in this paper (i.e., Proposition \ref{pro:5.1}) cannot be extended to the higher order setting.'' In Section $2$, we have shown our method based on the IRS can be extended to the higher setting with Proposition \ref{pro:5.1} as a particular case of our method. In addition, (\ref{eq:5.1-2}) can be derived from (\ref{eq:5.3-0}). 

Corollary \ref{cor:5.2} presents the interrelationship between a linear recurring sequence with respect to $E_{2}=\{ p_{1}, p_{2})$ and its IRS, which can be used to establish the identities of one sequence from the identities of other sequences. 

\medbreak
\noindent{\bf Example 3.1} Let us consider $A_{2}$, the set of all linear recurring sequences defined by the homogeneous linear recurrence relation (\ref{eq:5.1}) with coefficient set $E_{2}=\{ p_{1}, p_{2}\}$. If $E_{2}=\{1,1\}$, then the corresponding characteristic polynomial has roots $\alpha =(1+\sqrt{5})/2$ and $\beta =(1-\sqrt{5})/2$, and (\ref{eq:5.4}) gives the expression of the ISR of $A_{2}$, which is Fibonacci sequence $\{ F_{n}\}$:  

\[
F_{n}= \frac{1}{\sqrt{5}}\left\{ \left( \frac{1+\sqrt{5}}{2}\right)^{n}-\left( \frac{1-\sqrt{5}}{2}\right)^{n}\right\}.
\]
The sequence in $A_{2}$ with the initial vector $(2,1)$ is Lucas sequence $\{ L_{n}\}$. From (\ref{eq:5.3}) and (\ref{eq:5.4}) and noting $\alpha \beta=-1$, we have the well-known formulas (see, for example, \cite{Koshy}):

\[
L_{n}=F_{n}+2F_{n-1}=F_{n+1}+F_{n-1}, \quad F_{n}=\frac{1}{5}L_{n-1}+\frac{1}{5}L_{n+1}.
\]
By using the above formulas, one may transfer identities of Fibonacci number sequence to those of Lucas number sequence and vice verse. For instance, the above relationship can be used to prove that the following two identities are equivalent:  

\bns
&&F_{n+1}F_{n+2}-F_{n-1}F_{n}=F_{2n+1}\\
&&L^{2}_{n+1}+L^{2}_{n}=L_{2n}+L_{2n+2}.
\ens
It is clear that both of the identities are equivalent to the Carlitz identity, $F_{n+1}L_{n+2}-F_{n+2}L_{n}=F_{2n+1}$, shown in \cite{Car}.
\medbreak

\noindent{\bf Example 3.2} Let us consider $A_{2}$, the set of all linear recurring sequences defined by the homogeneous linear recurrence relation (\ref{eq:5.1}) with coefficient set $E_{2}=\{ p_1=p, p_2=1\}$. Then (\ref{eq:5.1-2}) tell us that $\{ a_{n}\}\in A_{2}$ satisfies 

\be\label{eq:9**}
a_n=\frac{2a_1-(p-\sqrt{4+p^2})a_0}{2\sqrt{4+p^2}} \alpha^n-\frac{2a_1-(p+\sqrt{4+p^2})a_0}{2\sqrt{4+p^2}}\left( -\frac{1}{\alpha}\right)^n,
\ee
where $\alpha$ is defined by 

\be\label{eq:9*}
\alpha =\frac{p+\sqrt{4+p^2}}{2} \,\,\,\mbox{and} \,\, \, \beta =-\frac{1}{\alpha}=\frac{p-\sqrt{4+p^2}}{2}.
\ee
Similarly, let $E_{2}=\{1,q\}$. Then 

\[
a_n=\left\{ \begin{array}{ll} \frac{2a_1-(1-\sqrt{1+4q})a_0}{2\sqrt{1+4q}}\alpha_1^n-
\frac{2a_1-(1+\sqrt{1+4q})a_0}{2\sqrt{1+4q}}\alpha_2^n, & if\,\, q\not=- \frac{1}{4};\\
\frac{1}{2^n} (2n a_1-(n-1) a_0), &if \,\, q=-\frac{1}{4},\end{array}\right.
\]
where $\alpha=\frac{1}{2}(1+\sqrt{1+4q})$ and $\beta=\frac{1}{2}(1-\sqrt{1+4q})$ are solutions of equation $x^2-x-q=0$. The first special case (\ref{eq:9**}) was studied by Falbo in \cite{Fal}. If $p=1$, the sequence is clearly the Fibonacci sequence. If $p=2$ ($q=1$), the corresponding sequence 
is the sequence of numerators (when two initial conditions are $1$ and $3$) or denominators (when two initial conditions are $1$ and $2$) of the convergent of a continued fraction to $\sqrt{2}$: 
$\{ \frac{1}{1}$, $\frac{3}{2}$, $\frac{7}{5},$ $\frac{17}{12}, \frac{41}{29} \ldots\}$, called the closest rational approximation sequence to $\sqrt{2}$. The second special case is for the case of $q=2$ ($p=1$), the resulting $\{ a_n\}$ is the Jacobsthal type sequences  (See Bergum, Bennett, Horadam, and Moore \cite{BBHM}). 

From Corollary \ref{cor:5.2}, for $E_{2}=\{p,1\}$, the IRS of $A_2$ with respect to $E_2$ is 

\[
\tilde F^{(2)}_{n}=\frac{1}{\sqrt{4+p^{4}}}\left\{ \left( \frac{p+\sqrt{4+p^{2}}}{2}\right)^{n}-\left( \frac{p-\sqrt{4+p^{2}}}{2}\right)^{n}\right\}.
\]
In particular, the IRS for $E_{2}=\{2,1\}$ is the well-known Pell number sequence $\{P_{n}\}=\{0,1,2,5,12,29,\ldots\}$ with the expression 

\[
P_{n}=\frac{1}{2\sqrt{2}}\left\{ (1+\sqrt{2})^{n}-(1-\sqrt{2})^{n}\right\}.
\]
Similarly, for $E_{2}=\{1,q\}$, the IRS of $A_2$ with respect to $E_2$ is 

\[
\tilde F^{(2)}_{n}=\frac{1}{\sqrt{1+4q}}\left\{ \left( \frac{1+\sqrt{1+4q}}{2}\right)^{n}-\left( \frac{1-\sqrt{1+4q}}{2}\right)^{n}\right\}.
\]
In particular, the ISR for $E_{2}=\{1,2\}$ is the well-known Jacobsthal number sequence $\{J_{n}\}=\{0,1,1,3,5,11,21,\ldots\}$ with the expression

\[
J_{n}=\frac{1}{3}\left( 2^{n}-(-1)^{n}\right). 
\]
The Jacobsthal-Lucas number $\{ j_{n}\}$ in $A_{2}$ with respect to $E_{2}=\{1,2\}$ satisfying $j_{0}=2$ and $j_{1}=1$ has the first few elements as $\{ 2,1,5,7,17,31,\ldots\}$. From (\ref{eq:5.3}), one may have 

\[
j_{n}=J_{n}+4J_{n-1}=2^{n}+(-1)^{n}.
\]
In addition, the above formula can transform all identities of Jacobsthal-Lucas number sequence to those of Jacobsthal number sequence and vice versa.  For example, we have 

\bns
&&J^{2}_{n}+4J_{n-1}J_{n}=J_{2n},\\
&&J_{m}J_{n-1}-J_{n}J_{m-1}=(-1)^{n}2^{n-1}J_{m-n},\\
&&J_{m}J_{n}+2J_{m}J_{n-1}+2J_{n}J_{m-1}=J_{m+n}
\ens
from 

\bns
&&j_{n}J_{n}=J_{2n},\\
&&J_{m}j_{n}-J_{n}j_{m}=(-1)^{n}2^{n+1}J_{m-n},\\
&&J_{m}j_{n}-J_{n}j_{m}=2J_{m+n},
\ens
respectively. Similarly, we can show that the following two identities are equivalent:

\[
j_{n}=J_{n+1}+2J_{n-1},\quad J_{n+1}=J_{n}+2J_{n-1}.
\]

\medbreak
\noindent{\bf Remark 3.2} Corollary \ref{cor:5.2} can be extended to the linear nonhomogeneous recurrence relations of order $2$ with the form: $a_{n}=p a_{n-1}+q a_{n-2} +\ell$ for $p+q\not= 1$. It can be seen that the above recurrence relation is equivalent to the homogeneous form (\ref{eq:5.1}) $b_n=p b_{n-1}+q b_{n-2}$, where $b_n=a_n-k$ and $k=\frac{\ell}{1-p-q}.$

\medbreak
\noindent{\bf Example 3.3}
An obvious example of Remark 3.2  is the Mersenne number $M_n=2^n-1$ ($n\geq 0$), which satisfies the linear recurrence relation of order $2$: $M_n=3 M_{n-1}-2 M_{n-2}$ ( with $ M_0=0$ and $M_1=1$) and the non-homogeneous recurrence relation of order $1$: $M_n=2M_{n-1}+1$ (with $M_0=0$).  It is easy to check that sequence $M_n=(k^{n}-1)/(k-1)$ satisfies both the homogeneous recurrence relation of order $2$, $M_n=(k+1)M_{n-1}-k M_{n-2}$, and the non-homogeneous recurrence relation of order $1$, $M_n=k M_{n-1}+1$, where $M_0=0$ and $M_1=1$. Here, $M_n$ is the IRS with respect to $E_2=\{3,-2\}$. Another example is Pell number sequence that satisfies both 
homogeneous recurrence relation $P_n=2P_{n-1}+P_{n-2}$ and the non-homogeneous relation 
$\bar P_n=2\bar P_{n-1}+\bar P_{n-2}+1$, where $P_n=\bar P_n+1/2$.  

\medbreak
\noindent{\bf Remark 3.3} 
In \cite{NZM}, Niven, Zuckerman, and Montgomery studied some properties of $\{G_n\}_{n\geq 0}$ and $\{ H_n\}_{n\geq 0}$ 
defined  respectively by the linear recurrence relations of order $2$:
\[
G_n=pG_{n-1}+q G_{n-2} \quad and \quad H_n=p H_{n-1}+q H_{n-2}
\]
with initial conditions $G_0=0$ and $G_1=1$ and $H_0=2$ and $H_1=p$, respectively. Clearly, $G_n=\tilde F^{(2)}_n$, the IRS of $A_2$ with respect to $E_2=\{p_1=p, p_2=q\}$. Using Corollary \ref{cor:5.2}, we may rebuild the relationship between the sequences $\{G_n\}$ and $\{ H_n\}$:

\bns
&&H_{n}=pG_{n}+2qG_{n-1},\\
&&G_{n}=\frac{q}{p^{2}+4q}H_{n-1}+\frac{1}{p^{2}+4q}H_{n+1}.
\ens

\medbreak

We now give more examples of higher order IRS in $A_3$. 
\medbreak

\noindent{\bf Example 3.4} Consider set $A_3$ of all linear recurring sequences defined by (\ref{eq:1}) with coefficient set $E_3=\{ p_1=1, p_2=1, p_3=1\}$. The IRS of $A_3$ is the tribonacci number sequence $\{ 0,0,1,1,2,4,7,13,24,44,\ldots\}$ (A000073). From  Proposition \ref{pro:3.1},  the generating  function of the tribonacci sequence is  

\[
\tilde P_3(t)=\frac{t^2}{1-t-t^2-t^3}.
\]
Theorem \ref{thm:1.3} gives the expression of the general term of tribonacci sequence as 

\[
\tilde F^{(3)}_n=\frac{\alpha^n_1}{(\alpha_1-\alpha_2)(\alpha_1-\alpha_3)}+
\frac{\alpha^n_2}{(\alpha_2-\alpha_1)(\alpha_2-\alpha_3)}+\frac{\alpha^n_3}{(\alpha_3-\alpha_1)(\alpha_3-\alpha_2)},
\]
where 

\bns
&&\alpha_1=-\frac{1}{3}-\frac{C}{3}+\frac{2}{3C},\\
&&\alpha_2=-\frac{1}{3}+\frac{C}{6}(1+i\sqrt{3})-\frac{1}{3C}(1-i\sqrt{3}),\\
&&\alpha_3=-\frac{1}{3}+\frac{C}{6}(1-i\sqrt{3})-\frac{1}{3C}(1+i\sqrt{3}),
\ens
in which $C=((6\sqrt{33}-34)/2)^{1/3}.$ The tribonacci-like sequence $\{ a_n\}=\{ 2,1,1,4,6,11,$ $21,\ldots\}$ (A141036) is in the set $A_3$. From Theorem \ref{thm:1.4} and its corollary \ref{cor:1.5}, $a_n$ can be presented as 

\bn\label{eq:5.13}
a_n&=&a_2\tilde F^{(3)}_n+\sum^1_{j=0}\sum^1_{k=j}a_k\tilde F^{(3)}_{n-j-1}. \nonumber\\
&=&a_2\tilde F^{(3)}_n+(a_0+a_1)\tilde F^{(3)}_{n-1}+a_1\tilde F^{(3)}_{n-2}.
\en
Using (\ref{eq:5.11}) and (\ref{eq:5.12}) in Theorem \ref{thm:1.6}, we may obtain the expression 

\be\label{eq:5.14}
\tilde F^{(3)}_n=\frac{6}{19} a_{n+1}-\frac{4}{19}a_n-\frac{1}{19} a_{n-1}, \quad n\geq 0,
\ee
where $a_{-1}:=-2$ due to the linear recurrence relation $a_n=a_{n-1}+a_{n-2}+a_{n-3}$. 

It is easy to see there holds identity 

\[
\tilde F^{(3)}_n=2 \tilde F^{(3)}_{n-1}-\tilde F^{(3)}_{n-4}, \quad n\geq 4,
\]
because the right-hand side is equal to $\tilde F^{(3)}_{n-1}+\tilde F^{(3)}_{n-2}+\tilde F^{(3)}_{n-3}$ after substituting into $\tilde F^{(3)}_{n-1}=\tilde F^{(3)}_{n-2}+\tilde F^{(3)}_{n-3}+\tilde F^{(3)}_{n-4}$. Thus, by using (\ref{eq:5.14}) we have an identity for $\{ a_n\}$:

\bns
&&6 a_{n+1}-16 a_n+7 a_{n-1}+2a_{n-2}+6 a_{n-3}-4 a_{n-4}-a_{n-5}=0\\
\ens
for all $n\geq 5$. Similarly from the identity $a_n=2a_{n-1}-a_{n-4}$ ($n\geq 4$) and relation (\ref{eq:5.13}), there holds the identity 

\[
\tilde F^{(3)}_{n}+\tilde F^{(3)}_{n-1}-5\tilde F^{(3)}_{n-2}-2\tilde F^{(3)}_{n-3}+\tilde F^{(3)}_{n-4}+3\tilde F^{(3)}_{n-5}+\tilde F^{(3)}_{n-6}=0
\]
for all $n\geq 6$.

It is easy to see there exists $r$ number sequences, denoted by $\{a^{(j)}_n\}$ ($j=1,2,\ldots, r$), in $A_r$ with respect to $E_r$ such that for any $\{ a_n\}\in A_r$ there holds 

\[
a_n=\sum^r_{j=1} c_j a^{(j)}_n, \quad n\geq 0, 
\]
where ${\bf c}=(c_1,c_2,\ldots, c_r)$ can be found from the system consisting of the above equations for $n=0, 1,\ldots, r-1$. In this sense, we may call $\{a^{(j)}_n\}$ ($j=1,2,\ldots, r$) a basis of $A_r$ with respect to $E_r$. For instance, for $r=2$, let $\{ a^{(1)}_n\}$ be $\{F_n\}$, the IRS of $A_2$ with respect to $E_2=\{1,1\}$ (i.e., the Fibonacci sequence), and $\{ a^{(2)}_n\}$ be the Lucas number sequence $\{ L_n\}$. Then, $\{ \{ F_n\}, \{ L_n\}\}$ is a basis of $A_2$ because, for any $\{ a_n\}\in A_2$ with respect to $E_2=\{1,1\}$ and the initial vector $(a_0, a_1)$, there holds 

\[
a_n=\left(a_1-\frac{1}{2} a_0\right)F_n +\frac{1}{2} a_0 L_n,
\]
where the coefficients of the above linear combination are found from the system 

\[
\left[\begin{array}{ll}0&2\\1&1\end{array}\right] \left[ \begin{array}{l} c_1\\ c_2\end{array}\right]=\left[ \begin{array}{l}a_0\\a_1\end{array}\right].
\]

Obviously, if $\{ L_n\}$ is replaced by any sequence $\{ a^{(2)}_n\}$ in $A_2$ with respect to $E_2=\{1,1\}$, provided the initial vector $(a^{(2)}_0, a^{(2)}_1)$ of $\{ a^{(2)}_n\}$ satisfies 

\be\label{eq:5.15}
det \left[\begin{array}{ll}0&a^{(2)}_0\\1&a^{(2)}_1\end{array}\right] \not= 0,
\ee
then $\{ \{ F_n\}, \{ a^{(2)}_n\}\}$ is a basis of $A_2$ with respect to 
$E_2=\{ 1,1\}$. However, if $a^{(2)}_0=1$ and $a^{(2)}_1=0$, then the corresponding 
$a^{(2)}_n=F_{n-1}$, and the corresponding basis $\{ \{ F_n\}, \{ a^{(2)}_n=F_{n-1}\}\}$ is called a trivial basis. Thus, for $A_2$ with respect to $E_2=\{ p_1, p_2\}$, $\{ \{ \tilde F^{(2)}_n\}, \{ a^{(2)}_n\}\}$ forms a non-trivial basis, if the initial vector $(a^{(2)}_0, a^{(2)}_1)$ of $\{ a^{(2)}_n\}$ satisfies (\ref{eq:5.15}) and $a^{(2)}_0\not= 1/p_2$ when $a^{(2)}_1=0$. The second condition guarantees that the basis is not trivial, otherwise $a^{(2)}_n=\tilde F^{(2)}_{n-1}$.

\section{A type of Identities of IRS in $A_2$}

Let $A_{2}$ be the set of all linear recurring sequences defined by the homogeneous linear recurrence relation (\ref{eq:5.1}) with coefficient set $E_{2}=\{p_1=p, p_2=q\}$, and let $\tilde F^{(2)}$ be the IRS of $A_2$. Inspired by \cite{Hsu}, we give a nonlinear combinatorial expression involving $\tilde F^{(2)}$ and a numerous identities based on the expression. Using the interrelationship between the IRS and a linear recurring sequence in $A_2$, one may obtain many identities involving sequences in $A_2$. More precisely, let us consider the following extension of  the results in \cite{Hsu} for the Fibonacci numbers to the general number sequences in $A_2$. Suppose $\{ a_n\}_{n\in{\bN}}$ be a nonzero sequence defined by the recurrence relation 

\be\label{eq:0.1}
a_{n}=p_1a_{n-1}+p_2a_{n-2},\quad n\geq 2, p_1,p_2\not= 0, 
\ee
with the initial conditions $a_0=0$ and any nonzero $a_1$. Here, $a_1$ must be nonzero, otherwise $a_n\equiv 0$. Hence, we may normalize $a_1$ to be $a_1=1$ by define a new sequence $g_n=a_n/a_1$ satisfying the same recurrence relation (\ref{eq:0.1}). Thus, under the assumption, our sequence $\{ a_n\}$ is the IRS $\{ \tilde F^{(2)}_n\}$ of $A_2$ with respect to $E_2=\{ p_1, p_2\}$. We now give a nonlinear combinatorial expression involving $\tilde F^{(2)}_n$. Our result will extend to the case of $a_0\not= 0$ and $a_1=p_1a_0$ later. In addition, sequence 
$\{\tilde F^{(2)}_n\}_{n\in{\bN}}$ can be extended to the the case of $\{	\tilde F^{(2)}_r\}_{r\in{\bZ}}$ by using the same recurrence relation for $r\geq 1$ and $\tilde F^{(2)}_{r+1}=p_1\tilde F^{(2)}_{r}+p_2\tilde F^{(2)}_{r-1}$ while $r\leq -3$. 

\begin{lemma}\label{lem:0.1}
For any $m\in{\bN}$ and $r\in{\bZ}$ there holds 

\be\label{eq:0.2}
\tilde F^{(2)}_{m+r}=\tilde F^{(2)}_m \tilde F^{(2)}_{r+1}+p_2 \tilde F^{(2)}_{m-1} \tilde F^{(2)}_r.
\ee
\end{lemma}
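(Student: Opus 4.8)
The plan is to prove the identity~(\ref{eq:0.2}) by induction on $m$ with $r\in\bZ$ held as a free parameter, exploiting the fact that, for each fixed $r$, both sides of~(\ref{eq:0.2}) are sequences in the index $m$ satisfying the \emph{same} second-order recurrence~(\ref{eq:0.1}). First I would fix $r\in\bZ$ and define $L_m:=\tilde F^{(2)}_{m+r}$ and $R_m:=\tilde F^{(2)}_m\tilde F^{(2)}_{r+1}+p_2\tilde F^{(2)}_{m-1}\tilde F^{(2)}_r$. The left side $L_m$ obviously satisfies $L_m=p_1 L_{m-1}+p_2 L_{m-2}$, since it is just a shift of the IRS. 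The right side $R_m$ is a fixed linear combination (with $m$-independent coefficients $\tilde F^{(2)}_{r+1}$ and $p_2\tilde F^{(2)}_r$) of the two sequences $\{\tilde F^{(2)}_m\}$ and $\{\tilde F^{(2)}_{m-1}\}$, each of which satisfies~(\ref{eq:0.1}) in $m$; hence $R_m$ satisfies the same recurrence. By the uniqueness of a sequence satisfying~(\ref{eq:0.1}) with prescribed initial values, it then suffices to verify $L_m=R_m$ for two consecutive values of $m$, which I would take to be $m=0$ and $m=1$.

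The base cases reduce to evaluating the IRS at its defining initials $\tilde F^{(2)}_0=0$, $\tilde F^{(2)}_1=1$, together with the extended value $\tilde F^{(2)}_{-1}=1/p_2$ coming from the backward extension described before Theorem~\ref{thm:1.4} (specialized to $r=2$, so $\tilde F^{(2)}_{-1}=1/p_2$). For $m=0$ the right side becomes $\tilde F^{(2)}_0\tilde F^{(2)}_{r+1}+p_2\tilde F^{(2)}_{-1}\tilde F^{(2)}_r=0+p_2\cdot(1/p_2)\cdot\tilde F^{(2)}_r=\tilde F^{(2)}_r$, which equals the left side $\tilde F^{(2)}_{0+r}$. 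For $m=1$ the right side is $\tilde F^{(2)}_1\tilde F^{(2)}_{r+1}+p_2\tilde F^{(2)}_0\tilde F^{(2)}_r=\tilde F^{(2)}_{r+1}+0=\tilde F^{(2)}_{r+1}$, matching $\tilde F^{(2)}_{1+r}$ on the left. Since the two sequences agree at $m=0,1$ and obey the same recurrence, they coincide for all $m\geq 0$; and because the whole argument is symmetric in the backward direction (the recurrence runs both ways once we extend indices to $\bZ$), the identity propagates to all $m\in\bN$ and indeed all integer $m$, with $r\in\bZ$ arbitrary.

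An alternative I would keep in reserve is a direct derivation from the explicit formula~(\ref{eq:5.2}): in the distinct-root case one writes $\tilde F^{(2)}_n=(\alpha^n-\beta^n)/(\alpha-\beta)$, substitutes into both sides, and uses $\alpha\beta=-p_2$ together with $\alpha+\beta=p_1$ to collapse the right-hand product; the repeated-root case follows by the same limiting process used in the proof of Theorem~\ref{thm:1.3}. This route is more computational but avoids any appeal to the backward extension.

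The main obstacle I anticipate is purely bookkeeping rather than conceptual: ensuring that the extended value $\tilde F^{(2)}_{-1}=1/p_2$ is correctly invoked in the $m=0$ base case (this is the only place the negative-index extension enters, and it is exactly what makes the $p_2$ factor cancel), and confirming that the uniqueness-plus-two-base-cases principle is legitimately applicable for \emph{all} $r\in\bZ$ simultaneously. Since $r$ enters $R_m$ only through the constant coefficients $\tilde F^{(2)}_{r+1}$ and $p_2\tilde F^{(2)}_r$, the induction in $m$ goes through uniformly in $r$, so no separate argument per value of $r$ is needed; this is the step I would state carefully to make the proof airtight.
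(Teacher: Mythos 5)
Your proof is correct and takes essentially the same route as the paper's: induction on $m$ via the recurrence (\ref{eq:0.1}), with the base case(s) settled by the initials $\tilde F^{(2)}_0=0$, $\tilde F^{(2)}_1=1$. The only cosmetic difference is that the paper needs just the single base case $m=1$ because its induction step applies the hypothesis at the shifted index $r+1$, whereas your uniqueness-of-solutions packaging requires the two base cases $m=0,1$ and hence the extended value $\tilde F^{(2)}_{-1}=1/p_2$, which is legitimately available since $p_2\not=0$.
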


\begin{proof}
For an arbitrarily $r\in{\bZ}$, we have 

\[
\tilde F^{(2)}_{r+1}=\tilde F^{(2)}_1 \tilde F^{(2)}_{r+1}+p_2\tilde F^{(2)}_0 \tilde F^{(2)}_r
\]
because $\tilde F^{(2)}_0=0$ and $\tilde F^{(2)}_1=1$. Assume (\ref{eq:0.2}) is true for $n\in{\bN}$, $n\geq 1$, and an arbitrary $r\in{\bZ}$, namely,

\[
\tilde F^{(2)}_{r+n}=\tilde F^{(2)}_n \tilde F^{(2)}_{r+1}+p_2\tilde F^{(2)}_{n-1} \tilde F^{(2)}_r, \quad r\in{\bZ}.
\]
Then,

\[
\tilde F^{(2)}_{r+n+1}=\tilde F^{(2)}_n \tilde F^{(2)}_{r+2}+p_2\tilde F^{(2)}_{n-1} \tilde F^{(2)}_{r+1}.
\]
On the hand, 

\bns
&&\tilde F^{(2)}_{n+1} \tilde F^{(2)}_{r+1}+p_2 \tilde F^{(2)}_{n}\tilde F^{(2)}_{r}=(p_1 \tilde F^{(2)}_n+p_2\tilde F^{(2)}_{n-1})\tilde F^{(2)}_{r+1}+p_2\tilde F^{(2)}_n \tilde F^{(2)}_{r}\\
&&=\tilde F^{(2)}_n \tilde F^{(2)}_{r+2}+p_2 \tilde F^{(2)}_{n-1} \tilde F^{(2)}_{r+1},
\ens
which implies 

\[
\tilde F^{(2)}_{r+n+1}=\tilde F^{(2)}_{n+1} \tilde F^{(2)}_{r+1}+p_2 \tilde F^{(2)}_{n}\tilde F^{(2)}_{r}
\]
and completes the proof with the mathematical indiction. 
\end{proof}
\eop

A direct proof of (\ref{eq:0.2}) can also be given. Actually, every $\tilde F^{(2)}_m\tilde F^{(2)}_{r+1}+p_2 \tilde F^{(2)}_{m-1} \tilde F^{(2)}_r$ can be reduced to $\tilde F^{(2)}_1 \tilde F^{(2)}_{r+m}+p_2\tilde F^{(2)}_0\tilde F^{(2)}_r=\tilde F^{(2)}_{r+m}$ by using the recurrence relation (\ref{eq:0.1}). 

\begin{theorem}\label{thm:0.2}
For any given $m,n\in{\bN}_0$ and $r\in{\bZ}$ there holds 

\be\label{eq:0.3}
\tilde F^{(2)}_{r+mn}=\sum^n_{j=0} {n\choose j} (\tilde F^{(2)}_m)^j(p_2	\tilde F^{(2)}_{m-1})^{n-j} \tilde F^{(2)}_{r+j}.
\ee
\end{theorem}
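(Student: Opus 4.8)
The plan is to prove the identity
\be\label{eq:plan-0.3}
\tilde F^{(2)}_{r+mn}=\sum^n_{j=0} {n\choose j} (\tilde F^{(2)}_m)^j(p_2\tilde F^{(2)}_{m-1})^{n-j} \tilde F^{(2)}_{r+j}
\ee
by induction on $n$, using Lemma \ref{lem:0.1} as the engine for the inductive step. The base case $n=0$ is immediate: the right-hand side collapses to the single term ${0\choose 0}\tilde F^{(2)}_{r}=\tilde F^{(2)}_{r}$, matching the left-hand side since $mn=0$. (The case $n=1$ is exactly Lemma \ref{lem:0.1}, which provides a useful sanity check on the binomial structure.) The guiding intuition is that Lemma \ref{lem:0.1} expresses the ``shift by $m$'' operator $\tilde F^{(2)}_{r}\mapsto \tilde F^{(2)}_{r+m}$ as a two-term linear combination with coefficients $\tilde F^{(2)}_{m+1}$-like data; iterating this shift $n$ times should, by the binomial theorem, produce exactly the claimed sum. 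So morally \eqref{eq:plan-0.3} says that applying the shift-by-$m$ operator $n$ times equals a single application governed by a binomial expansion.

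First I would set up the induction hypothesis as \eqref{eq:plan-0.3} holding for a fixed $n$ and \emph{all} $r\in\bZ$; keeping $r$ universally quantified is essential because the inductive step will instantiate the hypothesis at shifted values of $r$. For the step from $n$ to $n+1$, I would write $\tilde F^{(2)}_{r+m(n+1)}=\tilde F^{(2)}_{(r+m)+mn}$ and apply the induction hypothesis with $r$ replaced by $r+m$, yielding
\be\label{eq:plan-step}
\tilde F^{(2)}_{r+m(n+1)}=\sum^n_{j=0} {n\choose j} (\tilde F^{(2)}_m)^j(p_2\tilde F^{(2)}_{m-1})^{n-j} \tilde F^{(2)}_{r+m+j}.
\ee
Now each factor $\tilde F^{(2)}_{r+m+j}=\tilde F^{(2)}_{m+(r+j)}$ can be expanded by Lemma \ref{lem:0.1} (taking the ``$r$'' of the lemma to be $r+j$), giving $\tilde F^{(2)}_{r+m+j}=\tilde F^{(2)}_m\tilde F^{(2)}_{r+j+1}+p_2\tilde F^{(2)}_{m-1}\tilde F^{(2)}_{r+j}$. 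Substituting this into \eqref{eq:plan-step} splits the sum into two pieces, one carrying an extra factor of $\tilde F^{(2)}_m$ and shifting the index to $r+j+1$, the other carrying an extra factor of $p_2\tilde F^{(2)}_{m-1}$ and keeping the index at $r+j$.

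The main obstacle, and the only real work, is the bookkeeping that recombines these two sums into a single sum over $j$ from $0$ to $n+1$ with the correct binomial coefficients. After substitution I would reindex the first piece (shift $j\mapsto j-1$) so that both pieces are expressed in terms of $\tilde F^{(2)}_{r+j}$ with matching powers $(\tilde F^{(2)}_m)^{j}(p_2\tilde F^{(2)}_{m-1})^{n+1-j}$; the coefficient of the generic term $\tilde F^{(2)}_{r+j}$ then becomes ${n\choose j-1}+{n\choose j}$, which Pascal's rule collapses to ${n+1\choose j}$. One must check the two boundary terms separately: the $j=0$ term comes solely from the second piece (coefficient ${n\choose 0}={n+1\choose 0}$) and the $j=n+1$ term solely from the first piece (coefficient ${n\choose n}={n+1\choose n+1}$), both of which are consistent with Pascal's rule under the usual convention ${n\choose -1}={n\choose n+1}=0$. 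This establishes \eqref{eq:plan-0.3} for $n+1$ and all $r$, completing the induction. I anticipate no difficulty beyond this reindexing; because Lemma \ref{lem:0.1} is already proved for all $r\in\bZ$ (using the two-sided extension of the sequence described before the lemma), the step is valid even when $r+j$ runs into negative indices, so no separate treatment of the extended range is required.
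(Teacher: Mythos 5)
Your proof is correct, but it takes a genuinely different route from the paper. The paper follows the symbol method of Hsu: it sets $F(t)=\tilde F^{(2)}_{r+mt}$, uses Lemma \ref{lem:0.1} to show that the shifted difference operator satisfies $(\Delta -(p_2\tilde F^{(2)}_{m-1}-1)I)\,\tilde F^{(2)}_{r+mt}=\tilde F^{(2)}_m\tilde F^{(2)}_{r+mt+1}$, iterates this operator $j$ times, and then expands $E^n=\bigl(p_2\tilde F^{(2)}_{m-1}I+(\Delta-(p_2\tilde F^{(2)}_{m-1}-1)I)\bigr)^n$ by the binomial theorem for commuting operators, evaluating at $t=0$. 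Your argument is the elementary ``unpacked'' version of the same idea: a direct induction on $n$ (with $r$ universally quantified, which you correctly identify as essential), applying Lemma \ref{lem:0.1} termwise and recombining via Pascal's rule. The two proofs lean on Lemma \ref{lem:0.1} in exactly the same way and are equivalent in substance; what your route buys is self-containedness and the absence of any appeal to the operational calculus of $\Delta$, $E$, and $I$ (in particular, no need to justify that the relevant operators commute so that the binomial theorem applies), at the cost of the explicit reindexing and boundary-term bookkeeping that the operator formalism hides. Your closing remark that the two-sided extension of $\{\tilde F^{(2)}_n\}$ makes the step valid for negative indices is a point the paper's proof uses implicitly as well.
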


\begin{proof}
Let $F(t)=\tilde F^{(2)}_{r+mt}$. Then from Lemma \ref{lem:0.1}

\bns
&&\Delta F(t)=F(t+1)-F(t)=\tilde F^{(2)}_{r+mt+m}-\tilde F^{(2)}_{r+mt}\\
&=&\tilde F^{(2)}_m\tilde F^{(2)}_{r+mt+1}+(p_2\tilde F^{(2)}_{m-1}-1)\tilde F^{(2)}_{r+mt}.
\ens
Thus, there holds symbolically 

\[
(\Delta -(p_2\tilde F^{(2)}_{m-1}-1)I) \tilde F^{(2)}_{r+mt}=\tilde F^{(2)}_m\tilde F^{(2)}_{r+mt+1}.
\]
Using the operator $\Delta -(p_2\tilde F^{(2)}_{m-1}-1)I$ defined above $j$ times, we find 

\[
(\Delta -(p_2\tilde F^{(2)}_{m-1}-1)I)^j \tilde F^{(2)}_{r+mt}=(\tilde F^{(2)}_m)^j \tilde F^{(2)}_{r+mt+j}, \quad j\in{\bN}.
\]
Furthermore, noting the symbolic relation $E=I+\Delta$ and the last symbolical expression, one may find 

\bns
&& F(n)=\tilde F^{(2)}_{r+mn}=\left. E^n \tilde F^{(2)}_{r+mt}\right|_{t=0}=\left. (I+\Delta)^n \tilde F^{(2)}_{r+mt}\right|_{t=0}\\
&=& \left. (p_2\tilde F^{(2)}_{m-1}I+(\Delta -(p_2\tilde F^{(2)}_{m-1}-1)I)^n \tilde F^{(2)}_{r+mt}\right|_{t=0}\\
&=&\left. \sum^n_{j=0} {n\choose j} (p_2 \tilde F^{(2)}_{m-1})^{n-j} (\Delta -(p_2\tilde F^{(2)}_{m-1}-1)I)^j \tilde F^{(2)}_{r+mt}\right|_{t=0}\\
&=& \sum^n_{j=0} {n\choose j} (p_2 \tilde F^{(2)}_{m-1})^{n-j} (\tilde F^{(2)}_m)^j \tilde F^{(2)}_{r+j}
\ens
completing the proof of the theorem.
\end{proof}
\eop

\noindent{\bf Remark 3.1} The nonlinear expression for the case of $\{ a_n\}$ with $a_0=0$ and $a_1\not= 0$ can be extended to the case $a_0\not= 0$ and $a_1=p_1 a_0$. We may normalize $a_0=1$ and define $a_{-1}=0$ from the recurrence relation $a_1=p_1a_0+p_2 a_{-1}$. Hence, the sequence $\{ \hat F_n=a_{n-1}\}$ satisfies recurrence relation (\ref{eq:0.1}) for $n\geq 1$ with the initials $\hat F_0=0$ and $\hat F_1=1$. Hence, from (\ref{eq:0.3}) we have the nonlinear expression for $\hat F_n$ as 

\[
\hat F_{r+mn}=\sum^n_{j=0}{n\choose j} \hat F^k_{m-1}(q \hat F_{m-2})^{n-j} \hat F_{r+k}
\]
for $m\geq 1$ and $r\geq 0$. 

Similar to the last section and Remark 3.1, we may use the extension technique to define $\tilde F^{(2)}_n$ for negative integer index $n$. For example, substituting $n=1$ into (\ref{eq:0.1}) yields $\tilde F^{(2)}_1=p_1\tilde F^{(2)}_0+p_2\tilde F^{(2)}_{-1}$, which defines $\tilde F^{(2)}_{-1}=1/q$. With $r=-mn-1$, $r=-mn$, and $r=-mn+1$ in (\ref{eq:0.3}), a class of identities for $\tilde F^{(2)}_n$ with negative indices can be obtained as follows.

\begin{corollary}\label{cor:0.3}
For $m\geq 1$ and $n\geq 0$ there hold the identities

\bn\label{eq:0.4}
&&\sum^n_{j=0}p_2^{n-j+1}{n\choose j}(\tilde F^{(2)}_m)^j (\tilde F^{(2)}_{m-1})^{n-j}\tilde F^{(2)}_{j-mn-1}=1,\nonumber\\
&&\sum^n_{j=0}p_2^{n-j}{n\choose j}(\tilde F^{(2)}_m)^j (\tilde F^{(2)}_{m-1})^{n-j}\tilde F^{(2)}_{j-mn}=0, \nonumber\\
&&\sum^n_{j=0}p_2^{n-j}{n\choose j}(\tilde F^{(2)}_m)^j (\tilde F^{(2)}_{m-1})^{n-j}\tilde F^{(2)}_{j-mn+1}=1.
\en
\end{corollary}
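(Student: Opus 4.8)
The plan is to obtain all three identities as direct specializations of the master identity (\ref{eq:0.3}) in Theorem \ref{thm:0.2}, which already holds for arbitrary $r\in{\bZ}$. The three displayed identities correspond exactly to setting $r=-mn-1$, $r=-mn$, and $r=-mn+1$, so that the left-hand side $\tilde F^{(2)}_{r+mn}$ collapses to $\tilde F^{(2)}_{-1}$, $\tilde F^{(2)}_0$, and $\tilde F^{(2)}_1$, respectively. The only ingredients beyond Theorem \ref{thm:0.2} are the values of the IRS at small indices, so the first step is to record these: by definition $\tilde F^{(2)}_0=0$ and $\tilde F^{(2)}_1=1$, and from the backward extension $\tilde F^{(2)}_1=p_1\tilde F^{(2)}_0+p_2\tilde F^{(2)}_{-1}$ one reads off $\tilde F^{(2)}_{-1}=1/q=1/p_2$, exactly as set up in the paragraph preceding the corollary.

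Next I would substitute each value of $r$ into (\ref{eq:0.3}) and simplify. For $r=-mn$ the left side is $\tilde F^{(2)}_0=0$, and the summand index $r+j$ becomes $j-mn$, which yields the middle identity verbatim. For $r=-mn+1$ the left side is $\tilde F^{(2)}_1=1$ and the index $r+j$ becomes $j-mn+1$, producing the third identity immediately. The only case requiring a small manipulation is $r=-mn-1$: there the left side equals $\tilde F^{(2)}_{-1}=1/p_2$ and the index is $j-mn-1$. Multiplying both sides of the specialized (\ref{eq:0.3}) by $p_2$ absorbs the factor into the coefficient, turning the weight $p_2^{\,n-j}$ into $p_2^{\,n-j+1}$, which is precisely the exponent appearing in the first line of (\ref{eq:0.4}).

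There is no genuine obstacle here; the argument is bookkeeping driven entirely by Theorem \ref{thm:0.2}. The two points needing a line of care are, first, confirming that the $\tilde F^{(2)}$ appearing on the right-hand side at the negative indices $j-mn-1$, $j-mn$, $j-mn+1$ is interpreted via the same backward extension that gives $\tilde F^{(2)}_{-1}=1/p_2$, so that (\ref{eq:0.3}) applies unambiguously for these choices of $r\in{\bZ}$; and second, verifying in the $r=-mn-1$ case that clearing the denominator $1/p_2$ reproduces the stated exponent $n-j+1$ rather than $n-j$. Once these are checked, the three identities follow simultaneously, and the proof is complete.
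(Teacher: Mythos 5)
Your proof is correct and follows exactly the route the paper intends: the paper itself introduces $\tilde F^{(2)}_{-1}=1/p_2$ via the backward extension and then states that the corollary is obtained by putting $r=-mn-1$, $r=-mn$, and $r=-mn+1$ into (\ref{eq:0.3}), which is precisely your argument, including the multiplication by $p_2$ that converts the exponent $n-j$ into $n-j+1$ in the first identity. No differences worth noting.
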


Similarly, substituting $m=2,3,$ and $4$ into (\ref{eq:0.3}) and noting $\tilde F^{(2)}_2=p$, $\tilde F^{(2)}_3=p^2+q$, and $\tilde F^{(2)}_4=p(p^2+2q)$,  we have 

\begin{corollary}\label{cor:0.4}
For $n\geq 0$, there hold identities

\bn\label{eq:0.5}
&&\sum^n_{j=0}p_1^jp_2^{n-j}{n\choose j}\tilde F^{(2)}_{r+j}=\tilde F^{(2)}_{r+2n},\nonumber\\
&&\sum^n_{j=0}(p_1^2+p_2)^j(p_1p_2)^{n-j}{n\choose j}\tilde F^{(2)}_{r+j}=\tilde F^{(2)}_{r+3n},\nonumber\\
&&\sum^n_{j=0}p_1^jp_2^{n-j}(p_1^2+2p_2)^j(p_1^2+p_2)^{n-j}{n\choose j}\tilde F^{(2)}_{r+j}=\tilde F^{(2)}_{r+4n}.
\en
\end{corollary}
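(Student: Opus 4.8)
The plan is to obtain each of the three identities in Corollary \ref{cor:0.4} as a direct specialization of the master identity (\ref{eq:0.3}) from Theorem \ref{thm:0.2}, by fixing $m=2,3,4$ in turn and evaluating the small initial values $\tilde F^{(2)}_{m-1}$ and $\tilde F^{(2)}_m$ explicitly. First I would record the base values: from $\tilde F^{(2)}_0=0$, $\tilde F^{(2)}_1=1$, and the recurrence (\ref{eq:0.1}), one computes $\tilde F^{(2)}_2=p_1$, $\tilde F^{(2)}_3=p_1^2+p_2$, and $\tilde F^{(2)}_4=p_1(p_1^2+2p_2)=p_1^3+2p_1p_2$. (Note the statement writes these with $p=p_1$, $q=p_2$.)

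Now I would substitute into (\ref{eq:0.3}), which reads $\tilde F^{(2)}_{r+mn}=\sum_{j=0}^n \binom{n}{j}(\tilde F^{(2)}_m)^j(p_2\tilde F^{(2)}_{m-1})^{n-j}\tilde F^{(2)}_{r+j}$. For $m=2$: here $\tilde F^{(2)}_2=p_1$ and $\tilde F^{(2)}_1=1$, so the summand coefficient becomes $(p_1)^j(p_2\cdot 1)^{n-j}=p_1^j p_2^{n-j}$, yielding the first identity with right-hand side $\tilde F^{(2)}_{r+2n}$. For $m=3$: $\tilde F^{(2)}_3=p_1^2+p_2$ and $\tilde F^{(2)}_2=p_1$, so the coefficient becomes $(p_1^2+p_2)^j(p_2 p_1)^{n-j}=(p_1^2+p_2)^j(p_1p_2)^{n-j}$, giving the second identity with right-hand side $\tilde F^{(2)}_{r+3n}$. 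For $m=4$: $\tilde F^{(2)}_4=p_1(p_1^2+2p_2)$ and $\tilde F^{(2)}_3=p_1^2+p_2$, so the coefficient is $\bigl(p_1(p_1^2+2p_2)\bigr)^j\bigl(p_2(p_1^2+p_2)\bigr)^{n-j}$, which after distributing the $p_1^j$ and $p_2^{n-j}$ factors matches the printed form $p_1^j p_2^{n-j}(p_1^2+2p_2)^j(p_1^2+p_2)^{n-j}$. Thus each identity is immediate once the correct small-index values are plugged in.

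Since Theorem \ref{thm:0.2} is already established, there is essentially no analytical obstacle here; the corollary is a purely mechanical specialization. The only point requiring care is the bookkeeping of the coefficient $(p_2\tilde F^{(2)}_{m-1})^{n-j}$: one must correctly factor the expression for $m=4$ so that the $p_2^{n-j}$ and the polynomial factor $(p_1^2+p_2)^{n-j}$ are both displayed, and likewise verify that the $p_1$ pulled out of $\tilde F^{(2)}_4$ regroups into the stated $p_1^j$ term. I would therefore write the proof tersely as follows.

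\begin{proof}
The three identities follow by specializing (\ref{eq:0.3}) of Theorem \ref{thm:0.2} to $m=2,3,4$. From the initial conditions $\tilde F^{(2)}_0=0$ and $\tilde F^{(2)}_1=1$ together with the recurrence (\ref{eq:0.1}), we compute $\tilde F^{(2)}_2=p_1$, $\tilde F^{(2)}_3=p_1^2+p_2$, and $\tilde F^{(2)}_4=p_1(p_1^2+2p_2)$. Substituting $m=2$ into (\ref{eq:0.3}), the coefficient $(\tilde F^{(2)}_2)^j(p_2\tilde F^{(2)}_1)^{n-j}=p_1^j p_2^{n-j}$ produces the first identity. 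Substituting $m=3$, the coefficient $(\tilde F^{(2)}_3)^j(p_2\tilde F^{(2)}_2)^{n-j}=(p_1^2+p_2)^j(p_1 p_2)^{n-j}$ produces the second. Substituting $m=4$, the coefficient $(\tilde F^{(2)}_4)^j(p_2\tilde F^{(2)}_3)^{n-j}=p_1^j(p_1^2+2p_2)^j\, p_2^{n-j}(p_1^2+p_2)^{n-j}$ produces the third. In each case the right-hand side of (\ref{eq:0.3}) is $\tilde F^{(2)}_{r+mn}$, which gives $\tilde F^{(2)}_{r+2n}$, $\tilde F^{(2)}_{r+3n}$, and $\tilde F^{(2)}_{r+4n}$ respectively.
\end{proof}
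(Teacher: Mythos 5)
Your proposal is correct and is exactly the paper's argument: the paper likewise obtains Corollary \ref{cor:0.4} by substituting $m=2,3,4$ into (\ref{eq:0.3}) and using $\tilde F^{(2)}_2=p_1$, $\tilde F^{(2)}_3=p_1^2+p_2$, $\tilde F^{(2)}_4=p_1(p_1^2+2p_2)$. You have merely written out the coefficient bookkeeping in more detail than the paper does.
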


With an application of Corollary \ref{cor:5.2}, one may transfer the nonlinear expression (\ref{eq:0.3}) and its consequent identities shown in corollaries \ref{cor:0.3} and \ref{cor:0.4} to any linear recurring sequence defined by (\ref{eq:5.1}). For instance, from Corollary \ref{cor:0.4}, we immediately have 

\begin{corollary}\label{cor:0.5}
Let us consider $A_{2}$, the set of all linear recurring sequences defined by the homogeneous linear recurrence relation (\ref{eq:5.1}) with coefficient set $E_{2}=\{ p, q\}$. 
Then, for any $\{ a_n\}\in A_2$, there hold

\bns
&&\sum^n_{j=0}p_1^jp_2^{n-j}{n\choose j}(c a_{r+j-1}+da_{r+j-2})=ca_{r+2n-1}+d a_{r+2n-2},\nonumber\\
&&\sum^n_{j=0}(p_1^2+p_2)^j(p_1p_2)^{n-j}{n\choose j}(c a_{r+j-1}+da_{r+j-2})=ca_{r+3n-1}+d a_{r+3n-2},\nonumber\\
&&\sum^n_{j=0}p_1^jp_2^{n-j}(p_1^2+2p_2)^j(p_1^2+p_2)^{n-j}{n\choose j}\nonumber\\
&&\qquad  \times (c a_{r+j-1}+da_{r+j-2})=ca_{r+4n-1}+d a_{r+4n-2},
\ens
for $n\geq 0$, where $c$ and $d$ are given by 

\[
c= \frac{a_{1}-a_{0}p_1}{p_1(\tilde F^{(2)}_{1}-a_{0}a_{1}p_1-\tilde F^{(2)}_{0}p_2)},\quad d= -\frac{a_{1}p_2}{p_{1}(\tilde F^{(2)}_{1}-a_{0}a_{1}p_1-\tilde F^{(2)}_{0}p_2)},
\]
provided that $p_1\not= 0$, and $\tilde F^{(2)}_{1}-a_{0}a_{1}p_1-\tilde F^{(2)}_{0}p_2\not= 0$.
\end{corollary}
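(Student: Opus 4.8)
The plan is to prove Corollary~\ref{cor:0.5} by a single linear substitution that transports each of the three identities of Corollary~\ref{cor:0.4} from the IRS $\{\tilde F^{(2)}_n\}$ to the arbitrary sequence $\{a_n\}\in A_2$. The engine is the converse relation of Corollary~\ref{cor:5.2}, equations~(\ref{eq:5.4})--(\ref{eq:5.5}), which expresses each term of the IRS as a fixed linear combination of two consecutive terms of $\{a_n\}$; in the form needed here it reads $\tilde F^{(2)}_m = c\,a_{m-1}+d\,a_{m-2}$, with coefficients $c,d$ independent of $m$ and given by the displayed formulas, valid under $p_1\neq 0$ and the nonvanishing of their common denominator. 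First I would record this relation in the re-indexed shape $\tilde F^{(2)}_{r+j}=c\,a_{r+j-1}+d\,a_{r+j-2}$, observing that it holds for every integer index $m$ --- including the negative ones that occur when $r+j$ is small --- thanks to the extension of the IRS to $\bZ$ described before Theorem~\ref{thm:1.4}.

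Next I would substitute this relation into each identity of Corollary~\ref{cor:0.4}. For the first identity, $\sum_{j=0}^n p_1^j p_2^{n-j}{n\choose j}\tilde F^{(2)}_{r+j}=\tilde F^{(2)}_{r+2n}$, I replace $\tilde F^{(2)}_{r+j}$ by $c\,a_{r+j-1}+d\,a_{r+j-2}$ inside the sum and $\tilde F^{(2)}_{r+2n}$ by $c\,a_{r+2n-1}+d\,a_{r+2n-2}$ on the right. Since this is substituting equal quantities on both sides of a true equation, the resulting equation is automatically valid, and it is exactly the first asserted identity for $\{a_n\}$; because $c$ and $d$ do not depend on the summation index $j$, nothing beyond linearity of the finite sum is used. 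The second and third identities follow verbatim by the same replacement applied to the identities whose right-hand sides are $\tilde F^{(2)}_{r+3n}$ and $\tilde F^{(2)}_{r+4n}$.

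The hard part will be essentially nonexistent analytically: the whole corollary reduces to Corollary~\ref{cor:0.4} together with the linear dictionary of Corollary~\ref{cor:5.2}, and the argument is ``substitute equals for equals.'' The points that do demand care are bookkeeping. First, one must confirm that $\tilde F^{(2)}_m=c\,a_{m-1}+d\,a_{m-2}$ holds uniformly in $m$ with one fixed pair $(c,d)$: this is guaranteed because both sides satisfy the recurrence~(\ref{eq:5.1}) and agree at two consecutive indices, so equality propagates to all shifted arguments $r+j$ (and to the negative arguments produced by the extension). Second, one must carry the nondegeneracy hypothesis --- $p_1\neq 0$ and the denominator of $c$ and $d$ nonzero, equivalently the linear independence of $\{a_n\}$ and its shift --- through from Corollary~\ref{cor:5.2}, since that is precisely what makes $c$ and $d$ well defined. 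No step beyond this linear substitution is required.
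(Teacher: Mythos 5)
Your overall strategy is exactly the paper's: the paper proves this corollary with a single sentence, declaring that Corollary~\ref{cor:0.4} transfers to arbitrary $\{a_n\}\in A_2$ ``with an application of Corollary~\ref{cor:5.2},'' and your substitution argument is the natural expansion of that sentence. However, there is a genuine gap at the pivot of your argument, namely the claim that the displayed coefficients $c,d$ satisfy $\tilde F^{(2)}_m=c\,a_{m-1}+d\,a_{m-2}$. What Corollary~\ref{cor:5.2} actually establishes in (\ref{eq:5.4})--(\ref{eq:5.5}) is $\tilde F^{(2)}_n=c_1a_{n+1}+c_2a_{n-1}$ --- a different pair of shifts --- and the $c,d$ of the present statement are precisely those $c_1,c_2$ (with $a_1^2,a_0^2$ miswritten as $\tilde F^{(2)}_1,\tilde F^{(2)}_0$). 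With these coefficients the relation you need is false: for $E_2=\{1,1\}$ and $\{a_n\}=\{F_n\}$ one gets $c=1$, $d=-1$, and $c\,F_{m-1}+d\,F_{m-2}=F_{m-3}\neq F_m$. You assert that the needed identity ``is guaranteed because both sides satisfy the recurrence and agree at two consecutive indices,'' but agreement at two consecutive indices is exactly the point that must be checked for these particular coefficients, and it fails; so your ``substitute equals for equals'' step substitutes quantities that are not equal.

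Two repairs are available. Either re-solve the $2\times 2$ initial-value system for the shift pair $(a_{m-1},a_{m-2})$ that the statement actually uses, which produces different formulas for $c$ and $d$ (and requires extending $\{a_n\}$ to $a_{-1},a_{-2}$, hence $p_2\neq 0$); or, better, observe that the three identities hold for \emph{arbitrary} constants $c,d$: the sequence $g_n=c\,a_{n-1}+d\,a_{n-2}$ again lies in $A_2$, the addition formula of Lemma~\ref{lem:0.1} holds with the linear slot occupied by any member of $A_2$ (i.e.\ $a_{m+r}=\tilde F^{(2)}_m a_{r+1}+p_2\tilde F^{(2)}_{m-1}a_r$), and therefore the symbolic argument of Theorem~\ref{thm:0.2} yields $a_{r+mn}=\sum_{j=0}^{n}{n\choose j}(\tilde F^{(2)}_m)^j(p_2\tilde F^{(2)}_{m-1})^{n-j}a_{r+j}$ for every $\{a_n\}\in A_2$; applying this with $m=2,3,4$ to $\{g_n\}$ gives the stated identities directly, with no dictionary needed. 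Either way, the specific values of $c,d$ (and the nondegeneracy hypotheses) play no role in the truth of the displayed identities, which is a sign that the intended proof --- yours and the paper's --- is not the one actually doing the work.
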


The nonlinear expression (\ref{eq:0.3}) can be used to obtain a congruence relations involving products of the IRSs as modules. 

\begin{corollary}\label{cor:0.6}
For $r\in{\bZ}$, $m\geq 1$, and $n\geq 0$, there holds a congruence relation of the form

\be\label{eq:0.6}
\tilde F^{(2)}_{mn+r}\equiv (p_2 \tilde F^{(2)}_{m-1})^n \tilde F^{(2)}_r+(\tilde F^{(2)}_m)^n \tilde F^{(2)}_{n+r}\,\,(mod\,\, \tilde F^{(2)}_{m-1} \tilde F^{(2)}_m).
\ee
In particular, for $r=0$ and $gcd\,(\tilde F^{(2)}_m,\tilde F^{(2)}_n)=1$, 

\be\label{eq:0.7}
\tilde F^{(2)}_{mn}\equiv 0\,\, (mod\,\, \tilde F^{(2)}_m \tilde F^{(2)}_n).
\ee
In general, if $\tilde F^{(2)}_{m_1}$, $\tilde F^{(2)}_{m_2},\ldots$, $\tilde F^{(2)}_{m_s}$ be relatively prime to each other with each $m_k\geq 1$ ($k=1,2,\ldots, s$), then there holds 

\be\label{eq:0.8}
\tilde F^{(2)}_{m_1m_2\cdots m_s}\equiv 0\,\, (mod\,\, \tilde F^{(2)}_{m_1}\tilde F^{(2)}_{m_2}\cdots \tilde F^{(2)}_{m_s}).
\ee
\end{corollary}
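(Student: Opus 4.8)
The plan is to extract the three displayed congruences directly from the nonlinear expansion in Theorem~\ref{thm:0.2}. Starting from
\[
\tilde F^{(2)}_{r+mn}=\sum^n_{j=0} {n\choose j}(\tilde F^{(2)}_m)^j(p_2\tilde F^{(2)}_{m-1})^{n-j}\tilde F^{(2)}_{r+j},
\]
I would reduce the right-hand side modulo $\tilde F^{(2)}_{m-1}\tilde F^{(2)}_m$. Every summand with $1\le j\le n-1$ contains both the factor $(\tilde F^{(2)}_m)^j$ with $j\ge 1$ and the factor $(\tilde F^{(2)}_{m-1})^{n-j}$ with $n-j\ge 1$, hence is divisible by $\tilde F^{(2)}_{m-1}\tilde F^{(2)}_m$ and vanishes modulo that product. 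Only the boundary terms $j=0$ and $j=n$ survive, giving exactly
\[
\tilde F^{(2)}_{mn+r}\equiv (p_2\tilde F^{(2)}_{m-1})^n\tilde F^{(2)}_r+(\tilde F^{(2)}_m)^n\tilde F^{(2)}_{n+r}\pmod{\tilde F^{(2)}_{m-1}\tilde F^{(2)}_m},
\]
which is (\ref{eq:0.6}). I expect this first step to be entirely routine; the only care needed is the bookkeeping at the endpoints and noting that the hypotheses $m\ge1$, $n\ge0$ make all indices and binomial coefficients meaningful.

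For (\ref{eq:0.7}) I would set $r=0$ in (\ref{eq:0.6}). Since $\tilde F^{(2)}_0=0$, the first term drops out and we get $\tilde F^{(2)}_{mn}\equiv(\tilde F^{(2)}_m)^n\tilde F^{(2)}_n\pmod{\tilde F^{(2)}_{m-1}\tilde F^{(2)}_m}$, so in particular $\tilde F^{(2)}_m\mid \tilde F^{(2)}_{mn}$. By symmetry of the product $mn$ (applying the same argument with the roles of $m$ and $n$ interchanged, using $n\ge1$) one also gets $\tilde F^{(2)}_n\mid \tilde F^{(2)}_{mn}$. Under the coprimality hypothesis $\gcd(\tilde F^{(2)}_m,\tilde F^{(2)}_n)=1$, divisibility by each factor upgrades to divisibility by the product, yielding $\tilde F^{(2)}_{mn}\equiv 0\pmod{\tilde F^{(2)}_m\tilde F^{(2)}_n}$.

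Finally (\ref{eq:0.8}) follows by induction on $s$, treating $M=m_1m_2\cdots m_{s-1}$ and $m_s$ as the two indices in (\ref{eq:0.7}). The induction hypothesis gives $\tilde F^{(2)}_{m_1}\cdots\tilde F^{(2)}_{m_{s-1}}\mid \tilde F^{(2)}_{M}\mid \tilde F^{(2)}_{Mm_s}$, and separately $\tilde F^{(2)}_{m_s}\mid \tilde F^{(2)}_{Mm_s}$; since the $\tilde F^{(2)}_{m_k}$ are pairwise relatively prime, their product divides $\tilde F^{(2)}_{m_1\cdots m_s}$. The main obstacle—and the point I would state most carefully—is the passage from divisibility by each coprime factor to divisibility by their product: this requires that the family $\{\tilde F^{(2)}_{m_k}\}$ be pairwise coprime (as assumed) and uses the standard fact that pairwise-coprime divisors of a common multiple have coprime product dividing it. The underlying divisibility $\tilde F^{(2)}_a\mid\tilde F^{(2)}_{ab}$ is itself the genuine content here, and it rests squarely on the endpoint-survival argument applied to Theorem~\ref{thm:0.2}; everything downstream is elementary number theory.
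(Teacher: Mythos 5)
Your proposal matches the paper's proof: equation (\ref{eq:0.6}) is read off from the expansion (\ref{eq:0.3}) by observing that all interior terms $1\le j\le n-1$ carry both factors $\tilde F^{(2)}_m$ and $\tilde F^{(2)}_{m-1}$, then $r=0$ kills the first boundary term since $\tilde F^{(2)}_0=0$, the symmetric argument gives $\tilde F^{(2)}_n\mid\tilde F^{(2)}_{mn}$, and coprimality upgrades to the product. You simply spell out the endpoint bookkeeping and the induction for (\ref{eq:0.8}) that the paper leaves implicit; the route is the same.
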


\begin{proof}
(\ref{eq:0.6}) comes from (\ref{eq:0.3}) straightforward. By setting $r=0$, we have 

\[
\tilde F^{(2)}_{mn}\equiv (\tilde F^{(2)}_m)^n \tilde F^{(2)}_{n}\,\, (mod\,\, \tilde F^{(2)}_{m-1} \tilde F^{(2)}_m)\equiv 0 \,\, (mod\,\, \tilde F^{(2)}_m).
\]
Similarly, 

\[
\tilde F^{(2)}_{mn}\equiv 0\,\, (mod\,\, \tilde F^{(2)}_n).
\]
Thus, if $gcd (\tilde F^{(2)}_m,\tilde F^{(2)}_n)=1$, i.e., $\tilde F^{(2)}_m$ and $\tilde F^{(2)}_n$ are relatively prime, then we obtain (\ref{eq:0.7}), which implies (\ref{eq:0.8}).
\end{proof}
\eop

\noindent{\bf Example 4.1} For $E_2=\{1,1\}, \{1,2\}, and \{2,1\}$, formula (\ref{eq:0.3}) in Theorem \ref{thm:0.2} leads the following three non-linear identities for Fibonacci, Pell, and Jacobsthal number sequences, respectively:

\bns
&&F_{mn+r}=\sum^n_{j=0} {n\choose j} F^j_mF^{n-j}_{m-1}F_{r+j},\\
&&P_{mn+r}=\sum^n_{j=0} {n\choose j} P^j_mP^{n-j}_{m-1}P_{r+j},\\
&&J_{mn+r}=\sum^n_{j=0} {n\choose j} J^j_m(2J_{m-1})^{n-j}J_{r+j},
\ens
where the first one is given in the main theorem of \cite{Hsu}. 

\medbreak
\noindent{\bf Example 4.2} As what we have presented, one may extend Fibonacci, Pell, and Jacobsthal numbers to negative indices as $\{F_n\}_{n\in {\bZ}}=\{\ldots, 2,-1,1,0,1,1,2,3,5,\ldots\}$, $\{P_n\}_{n\in {\bZ}}=\{ \ldots, 5,-2,1,0,1,2,5,12,29,\ldots\}$, and $\{J_n\}_{n\in {\bZ}}=\{ \ldots, 3/8, -1/4. 1/2, 0,1,$ $1,3,5,11,\ldots\}$ by using the corresponding linear recurrence relation with respect to $E_2=\{1,1,\}, \{ 2,1\}$, and $\{1,2\}$, respectively. Thus, from the first formula of (\ref{eq:0.4}) in Corollary \ref{cor:0.3}, there hold 

\bns
&&\sum^n_{j=0} {n\choose j} F^j_mF^{n-j}_{m-1}F_{j-mn-1}=1,\\
&&\sum^n_{j=0} {n\choose j} P^j_mP^{n-j}_{m-1}P_{j-mn-1}=1,\\
&&\sum^n_{j=0}2^{n-j+1} {n\choose j} J^j_mJ^{n-j}_{m-1}J_{j-mn-1}=1.
\ens
The identities generated by using the other formulas in (\ref{eq:0.4}) and the formulas in Corollaries \ref{cor:0.4}-\ref{cor:0.6} can be written similarly, which are omitted here. 

\section{Applications of IRS in Stirling numbers, Wythoff array, and the Boustrophedon transform} 
Let $A_{r}$ be the set of all linear recurring sequences defined by the homogeneous linear recurrence relation (\ref{eq:1}) with coefficient set $E_{r}=\{ p_{1}, p_{2}, \ldots, p_r\}$, and let $\tilde F^{( r)}_{n}$ be the IRS of $A_{r}$. Theorems \ref{thm:1.4} gives expressions of $\{a_{n}\}\in A_{r}$ and the IRS of $A_{r}$ in terms of each other. We will give a different expression of $\{a_{n}\}\in A_{r}$ in terms of $\tilde F^{( r)}_{n}$ using the generating functions of $\{a_{n}\}$ and $\{ \tilde F^{( r)}_{n}\}$ shown in Proposition \ref{pro:3.1}. More precisely, let $\{ a_{n}\}\in A_{r}$. Then, Proposition \ref{pro:3.1} shows that its generating function $P(t)$ can be written as (\ref{eq:3.1}). In particular, the generating function for the IRS with respect to $\{ p_j\}$ is presented as (\ref{eq:3.2}). Comparing the coefficients of $P(t)$ and $\tilde P(t)$, one may have 

\begin{proposition}\label{pro:3.0}
Let $\{ a_{n}\}$ be an element of $A_{r}$, and let $\{ \tilde F^{( r)}_{n}\}$ be the IRS of $A_{r}$. Then 

\bn
&&\tilde F^{( r)}_{n}=[t^{n-r+1}]\frac{1}{1-\sum^{r}_{j=1}p_{j}t^{j}}\label{eq:3.3}\\
&&a_{n}=a_{0}\tilde F^{( r)}_{n+r-1}+\sum^{r-1}_{k=1}(a_{k}-\sum^{k}_{j=1}p_{j}a_{k-j})\tilde F^{( r)}_{n+r-k+1}.\label{eq:3.3.2}
\en
\end{proposition}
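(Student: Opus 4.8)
The plan is to obtain both identities purely by coefficient extraction from the two generating functions already computed in Proposition~\ref{pro:3.1}: the expression (\ref{eq:3.1}) for the generating function $P_r(t)=\sum_{n\ge 0}a_nt^n$ of an arbitrary $\{a_n\}\in A_r$, and the expression (\ref{eq:3.2}) for the generating function $\tilde P_r(t)=\sum_{n\ge 0}\tilde F^{(r)}_nt^n$ of the IRS. The key observation is that the common denominator $1-\sum_{j=1}^r p_jt^j$ occurs in both, so $\tilde P_r(t)$ is exactly $t^{r-1}$ times the reciprocal of that denominator; the coefficients of this reciprocal series are the building blocks I will use throughout.

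First I would prove (\ref{eq:3.3}). Since $\tilde F^{(r)}_n=[t^n]\tilde P_r(t)$ and, by (\ref{eq:3.2}), $\tilde P_r(t)=t^{r-1}/(1-\sum_{j=1}^r p_jt^j)$, multiplication by the monomial $t^{r-1}$ merely shifts the coefficient index. Hence
\[
\tilde F^{(r)}_n=[t^n]\,\frac{t^{r-1}}{1-\sum_{j=1}^r p_jt^j}=[t^{n-r+1}]\,\frac{1}{1-\sum_{j=1}^r p_jt^j},
\]
which is (\ref{eq:3.3}). Equivalently, writing $G(t):=1/(1-\sum_{j=1}^r p_jt^j)$, this says $[t^m]G(t)=\tilde F^{(r)}_{m+r-1}$, an identification I will reuse.

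Next I would prove (\ref{eq:3.3.2}). I read (\ref{eq:3.1}) as a product $P_r(t)=N(t)\,G(t)$, where $N(t)=a_0+\sum_{k=1}^{r-1}\big(a_k-\sum_{j=1}^k p_ja_{k-j}\big)t^k$ is the numerator polynomial and $G(t)$ is the reciprocal series just described. Extracting $[t^n]$ of this product by the Cauchy product formula distributes over the finitely many terms of $N(t)$: the constant term contributes $a_0[t^n]G(t)$, and each monomial $t^k$ contributes its coefficient of $N$ times $[t^{n-k}]G(t)$. Substituting $[t^m]G(t)=\tilde F^{(r)}_{m+r-1}$ turns $[t^n]G(t)$ into $\tilde F^{(r)}_{n+r-1}$ and $[t^{n-k}]G(t)$ into $\tilde F^{(r)}_{n+r-1-k}$, producing the claimed expansion of $a_n$ as a linear combination of shifted IRS values.

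There is no genuine obstacle in the computation; the only thing to watch is the index bookkeeping in the two shifts, one coming from the factor $t^{r-1}$ in $\tilde P_r(t)$ and one from the monomials $t^k$ in $N(t)$. Care here fixes the precise subscripts $n+r-1$ and $n+r-1-k$ in the final sum, so the prudent last step is to cross-check these shifts on the case $r=2$: there the product collapses to $a_n=a_0\tilde F^{(2)}_{n+1}+(a_1-p_1a_0)\tilde F^{(2)}_n$, which reduces to the known expansion (\ref{eq:5.3}) precisely by one application of the recurrence (\ref{eq:5.1}) to $\tilde F^{(2)}_{n+1}$.
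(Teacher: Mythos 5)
Your proof is correct and is essentially the paper's own argument: both derive (\ref{eq:3.3}) by the index shift coming from the factor $t^{r-1}$ in (\ref{eq:3.2}), and both obtain the second identity by extracting $[t^n]$ from the product form of (\ref{eq:3.1}) using $[t^m]\bigl(1-\sum^{r}_{j=1}p_jt^j\bigr)^{-1}=\tilde F^{(r)}_{m+r-1}$. One remark: the subscript $n+r-1-k$ you obtain (and verify for $r=2$ against (\ref{eq:5.3})) is the correct one, and it is also what the paper's own proof produces via $[t^{n-k}]$; the subscript $n+r-k+1$ printed in the statement of the proposition is a typo.
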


\begin{proof}
(\ref{eq:3.3}) comes from 

\[
\tilde F^{( r)}_{n}=[t^{n}]\tilde P(t)=[t^{n-r+1}]\frac{1}{1-\sum^{r}_{j=1}p_{j}t^{j}}.
\]
Hence,  

\bns
&&a_{n}=[t^{n}]P(t)=a_{0}[t^{n}]\frac{1}{1-\sum^{r}_{j=1}p_{j}t^{j}}+[t^{n}]\frac{\sum^{r-1}_{k=1}\left( a_{k}-\sum^{k}_{j=1}p_{j}a_{k-j}\right)t^{k}}{1-\sum^{r}_{j=1}p_{j}t^{j}}\\
&=&a_{0}\tilde F^{( r)}_{n+r-1}+\sum^{r-1}_{k=1}\left( a_{k}-\sum^{k}_{j=1}p_{j}a_{k-j}\right)[t^{n-k}] \frac{1}{1-\sum^{r}_{j=1}p_{j}t^{j}},
\ens
which implies (\ref{eq:3.3}).
\end{proof}
\eop

\begin{proposition}\label{pro:3.2}
If 

\be\label{eq:3.-1}
g(t)=\frac{ t^{r-1}}{1-\sum^{r}_{j=1}p_{j}t^{j}}
\ee
is the generating function of a sequence $\{ b_n\}$, then $\{ b_n\}$ satisfies 

\[
b_n=\sum^r_{j=1}p_j b_{n-j}, \quad n\geq r,
\]
$b_0=\cdots =b_{r-2}=0$ and $b_{r-1}=1$.
\end{proposition}

\begin{proof}
It is clearly that

\bns
&&b_j=[t^j] g(t)=[t^{j-r+1}]\frac{ 1}{1-\sum^{r}_{j=1}p_{j}t^{j}}\\
&=&[t^{j-r+1}]\sum_{k\geq 0} \left( \sum^{r}_{j=1}p_{j}t^{j}\right)^k=\left\{ \begin{array}{ll} 0 & for\,\, 0\leq j\leq r-2,\\
1 &if\,\, j=r-1.\end{array}\right.
\ens
For $n\geq r$, 

\bns
&&\sum^r_{j=1} p_j b_{n-j}=\sum^r_{j=1} p_j [t^{n-j}]g(t)\\
&=&[t^n]\sum^r_{j=1}t^{r-1} \frac{ p_jt^{j}}{1-\sum^{r}_{i=1}p_{i}t^{i}}\\
&=&[t^n] t^{r-1} \left( \frac{ 1}{1-\sum^{r}_{i=1}p_{i}t^{i}}-1\right)\\
&=&[t^n]g(t)-[t^n]t^{r-1}=b_n,
\ens
which completes the proof.
\end{proof}
\eop

The Stirling numbers of the second kind $S(n,k)$ count the number of ways to partition a set of $n$ labelled objects into $k$ unlabeled subsets. We now prove sequence $\{b_n =S(n+1,k)\}_{n\geq 0}$ is the IRS of $A_k$ for any arbitrarily $k\in{\bN}_0$ by using the structure of the triangle of the Stirling numbers of the second kind:

\begin{table} \centering
$\begin{array}{c|rrrrrrr}
  n\backslash k & 0 & 1 & 2 & 3 & 4 & 5 & 6 \\ \hline
  0 & 1  \\
  1 &  0& 1  \\
  2 & 0& 1 & 1  \\
  3 & 0& 1 & 3 & 1  \\
  4 &  0 & 1 & 7 & 6& 1  \\
  5 & 0 & 1& 15 & 25 & 10 & 1  \\
  6 &  0 & 1 &  31 & 90 & 65 & 15 & 1 \\
\end{array}$
\caption{Table $1$. Triangle of the Stirling numbers of the second kind}
\end{table}

\begin{theorem}\label{thm:3.3}
Let $S(n,k)$ be the Stirling numbers of the second kind. Then, for any fixed integer $k>0$, $S(n+1,k)$ is the IRS of $A_k$, i.e., 

\be\label{eq:3.4}
S(n+1,k)=\tilde F^{(k)}_n,
\ee
where $A_k$ is the set of linear recurring sequences $\{ a_n\}$ generated by 

\be\label{eq:3.5}
a_n=\sum^k_{j=1}p_j a_{n-j},
\ee
which has characteristic polynomial 

\be\label{eq:3.6}
P(t)=t^k-\sum^k_{j=1}p_j t^{k-j}=\Pi^k_{j=1} (1-j t).
\ee
Thus, the generating function of $\{ S(n+1,k)\}$ is 

\be\label{eq:3.7}
\tilde P(t)=\frac{t^k}{\Pi^k_{j=1} (1-jt)}.
\ee
\end{theorem}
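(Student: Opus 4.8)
The plan is to establish the recurrence-based characterization of Stirling numbers of the second kind and then invoke the machinery already developed in the paper. The key fact I would start from is the classical recurrence $S(n,k)=S(n-1,k-1)+k\,S(n-1,k)$, which can be read directly from the structure of Table~1. The goal is to show that for fixed $k$, the shifted sequence $b_n=S(n+1,k)$ satisfies the order-$k$ linear recurrence $(\ref{eq:3.5})$ whose characteristic polynomial is $P(t)=\Pi^k_{j=1}(t-j)$, together with the correct initial conditions $b_0=\cdots=b_{k-2}=0$ and $b_{k-1}=1$; by the definition of IRS this will yield $(\ref{eq:3.4})$.

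First I would fix the generating function approach, since the paper has already set up Proposition~\ref{pro:3.2} precisely for this purpose. The well-known exponential-type identity reduces, in the ordinary generating function setting, to the fact that $\sum_{n\geq 0} S(n,k) t^n = t^k/\Pi^k_{j=1}(1-jt)$. Shifting the index to $b_n=S(n+1,k)$ multiplies the generating function by $1/t$ and lowers the power of $t$ in the numerator, giving exactly the form $(\ref{eq:3.7})$, namely $t^{k-1}/\Pi^k_{j=1}(1-jt)$ for $b_n$. Comparing with $(\ref{eq:3.-1})$, I see that this is precisely the hypothesis of Proposition~\ref{pro:3.2} with $r=k$ and with coefficients $p_j$ determined by expanding $\Pi^k_{j=1}(1-jt)$. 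Proposition~\ref{pro:3.2} then hands me for free both the recurrence $b_n=\sum^k_{j=1}p_j b_{n-j}$ and the initial conditions $b_0=\cdots=b_{k-2}=0$, $b_{k-1}=1$, which is exactly the IRS characterization.

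The one genuine verification I would carry out explicitly is the identity $(\ref{eq:3.6})$: that the characteristic polynomial $t^k-\sum^k_{j=1}p_j t^{k-j}$ coincides with $\Pi^k_{j=1}(t-j)$, equivalently that $1-\sum^k_{j=1}p_j t^j=\Pi^k_{j=1}(1-jt)$ after the standard reciprocal-polynomial substitution. This is just matching the denominator of the generating function against the product form and reading off the $p_j$ as (signed) elementary symmetric functions of $1,2,\ldots,k$. Once this identification is in place, the roots of the characteristic polynomial are $\alpha_j=j$ for $j=1,\ldots,k$, all simple, so Theorem~\ref{thm:1.3} additionally supplies the explicit closed form $S(n+1,k)=\sum^k_{j=1} j^n/\Pi^k_{i=1,i\neq j}(j-i)$, recovering the classical explicit formula as a consistency check.

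The main obstacle, such as it is, lies in correctly justifying the generating-function identity $\sum_{n\geq 0}S(n,k)t^n=t^k/\Pi^k_{j=1}(1-jt)$, since everything else then follows mechanically from Proposition~\ref{pro:3.2}. I would derive this either from the two-term recurrence $S(n,k)=S(n-1,k-1)+k\,S(n-1,k)$ by inducting on $k$ (each step multiplies the previous generating function by $t/(1-kt)$, starting from $\sum_n S(n,0)t^n=1$), or by citing the standard result directly. An alternative, entirely self-contained route that avoids generating functions would be to verify the recurrence $(\ref{eq:3.5})$ directly: one shows $S(n+1,k)=\sum^k_{j=1}p_j S(n+1-j,k)$ holds for the $p_j$ equal to the elementary symmetric functions of $\{1,\ldots,k\}$, using the operator identity that $\Pi^k_{j=1}(E-j)$ annihilates $S(n+1,k)$, where $E$ is the shift in $n$; this is the same content as saying the roots are $1,2,\ldots,k$. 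Either way the initial-condition check is immediate from Table~1 (the first nonzero entry in column $k$ is $S(k,k)=1$, giving $b_{k-1}=1$), so I expect the write-up to be short.
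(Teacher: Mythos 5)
Your proposal is correct and follows essentially the same route as the paper: both derive the column generating function $g_k(t)=t^k/\Pi^{k}_{j=1}(1-jt)$ inductively from the two-term recurrence $S(n+1,k)=kS(n,k)+S(n,k-1)$ (each step multiplying by $t/(1-kt)$), shift the index to get $t^{k-1}/\Pi^{k}_{j=1}(1-jt)$ for $b_n=S(n+1,k)$, and then invoke Proposition~\ref{pro:3.2} to read off the recurrence and the IRS initial conditions. Your explicit check that the characteristic polynomial is $\Pi^{k}_{j=1}(t-j)$ (the reciprocal of the denominator) is a worthwhile addition, since the paper's equation (\ref{eq:3.6}) conflates the characteristic polynomial with its reciprocal $\Pi^{k}_{j=1}(1-jt)$.
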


\begin{proof}
$\{S(n,k)\}_{0\leq k\leq n}$ obeys the recurrence relation 

\be\label{eq:3.8}
S(n+1,k)=k S(n,k)+S(n,k-1), \quad 0\leq k\leq n, 
\ee
which can be explained as follows from the definition of $S(n,k)$ shown above: A partition of the $n+1$  objects into $k$ nonempty subsets either contains the subset $\{ n+1\}$ or it does not. The number of ways that $\{ n+1\}$ is one of the subsets is given by $S(n,k-1)$, since we must partition the remaining  $n$ objects into the available $k-1$ subsets. The number of ways that $\{ n+1\}$ is not one of the subsets  is given by $k S(n,k)$ because we partition all elements other than $n+1$ into $k$ subsets with $k$  choices for inserting the element $n+1$. Summing these two values yields (\ref{eq:3.8}). Write (\ref{eq:3.8}) as 

\[
S(n,k-1)=S(n+1,k)-kS(n,k), \quad 0< k\leq n.
\]
Denote by $g_{k-1}(t)$ and $g_k(t)$ the generating functions of $k-1$st and $k$th columns of the Stirling triangle shown in Table $2$. Then the above equation means 

\[
[t^n] g_{k-1}(t)=S(n-1,k)=S(n+1,k)-kS(n,k)=[t^{n+1}]g_k(t)-k [t^n]g_k(t)=[t^{n+1}](1-kt)g_k(t)
\]
for $0< k\leq n$. Therefore, we have 

\[
g_k(t)=\frac{t}{1-kt}g_{k-1}(t).
\]
Since $g_1(t)=t/(1-t)$, we obtain the generating function of $\{ S(n,k)\}$ for $k>0$ 

\be\label{eq:3.9}
g_k(t)=\frac{t^{k}}{\Pi^k_{j=1} (1-jt)}.
\ee
Denote $b_n=S(n+1,k)$ and the generating function of $\{ b_n\}$ by $B(t)$. Then 

\[
b_n=[t^n] B(t)=S(n+1,k)=[t^{n+1}] g_k(t), \quad 0<k\leq n,
\]
which implies the generating function of $\{ b_n\}$ is 

\be\label{eq:3.10}
B(t)=\frac{1}{t} g_k(t)=\frac{t^{k-1}}{\Pi^k_{j=1} (1-jt)}.
\ee
Using Proposition \ref{pro:3.2}, one immediately know that $\{ S(n+1,k)\}$ is the IRS of $A_k$ that has the generating function (\ref{eq:3.7}), where $A_k$ is the set of linear recurring sequences $\{ a_n\}$ generated by recurrence relation (\ref{eq:3.5}) with the characteristic polynomial (\ref{eq:3.6}).
\end{proof}
\eop

\noindent{\bf Example 5.1} Since the second column of the Stirling triangle, $\{ S(n+1, 2)\}$, has the generating function 

\[
\tilde P_2(t)=\frac{t}{(1-t)(1-2t)}=\frac{1}{1-3t+2t^2},
\]
there holds recurrence relation 

\[
S(n+1,2)=3S(n,2)-2S(n-1,2)
\]
with the initial $S(1,2)=0$ and $S(2,2)=1$. Thus we have $S(3,2)=3$, $S(4,2)=7$, $S(5,2)=15$, $S(6,2)=31$, etc.

Similarly, $\{ S(n+1,3)\}$ is a linear recurring sequence generated by 

\[
S(n+1,3)=6S(n,3)-11S(n,2)+6S(n,1)
\]
with initials $S(1,3)=S(2,3)=0$ and $S(3,3)=1$. From the recurrence relation one may find $S(4,3)=6$, $S(5,3)=25$, $S(6,3)=90$, etc.

We now consider the Wythoff array shown in Table $2$, in which the two columns to the left of the vertical line consist respectively of the nonnegative integers n, and the lower Wythoff sequence (A201), whose $n$th term is $[(n + 1)\alpha]$, where $\alpha=(1+\sqrt{5})/2$. The rows are linear recurring sequences generated by the Fibonacci rule that each term is the sum of the two previous terms.  

\begin{center}
\begin{tabular}{cccccccccc}
  $0$ & $1$ & $\vline$ & $1$& $2$ & $3$ & $5$& $8$ & $13$&$\cdots$\\
 \rule[-3mm]{0mm}{8mm}
  $1$ & $3$& $\vline $ & $4$& $7$&$11$& $18$ & $29$ &$47$&$\cdots$\\ 
 \rule[-3mm]{0mm}{8mm}
 $2$ & $4$ & $\vline$ & $6$& $10$ & $16$ & $26$& $42$ & $68$&$\cdots$\\
 \rule[-3mm]{0mm}{8mm}
 $3$ & $6$ & $\vline$ & $9$& $15$ & $24$ & $39$& $63$ &$102$ & $\cdots$\\
 \rule[-3mm]{0mm}{8mm}
 $4$ & $8$ & $\vline$ & $12$& $20$ & $32$ & $52$& $84$& $136$ &$\cdots$  \\
 \rule[-3mm]{0mm}{8mm}
 $5$ & $9$ & $\vline$ & $14$& $23$ & $37$ & $60$& $97$ &$157$ &$\cdots$  \\
 \rule[-3mm]{0mm}{8mm}
$6$ & $11$ & $\vline$ & $17$& $28$ & $45$ & $73$& $118$ & $191$ &$\cdots$  \\
 \rule[-3mm]{0mm}{8mm}
$7$ & $12$ & $\vline$ & $19$& $31$ & $50$ & $81$ &$131$ & $212$ & $\cdots$  \\
 \rule[-3mm]{0mm}{8mm}
$\vdots$ &$\vdots$  &$\vline$ & $\vdots$& $\vdots$&$\vdots$ & $\vdots$&$\vdots$&$\vdots$
 \end{tabular}
 \centerline{Table $2$. the Wythoff array}
 \end{center}

The first row sequence in Table $2$ is Fibonacci sequence, i.e., the IRS in $A_2$  with respect to $E_2=\{1,1\}$. The $j$th row sequence $\{ a^{(j)}_n\}_{n\geq 0}$ are linear recurring sequences in $A_2$ with respect to $E_2=\{1,1\}$ with initials $(j, [(j+1)\alpha])$, $j=1,2,\ldots$, where $\alpha=(1+\sqrt{5})/2$. From (\ref{eq:5.3-0}) or (\ref{eq:5.1-2}), the expression of $a_n^{(j)}$ can be written as 

\bn\label{eq:3.11}
a_n^{(j)}&=&\left( \frac{\left[(j+1)\frac{1+\sqrt{5}}{2}\right]-j \frac{1-\sqrt{5}}{2}}{\sqrt{5}}\right) \left( \frac{1+\sqrt{5}}{2}\right)^n\nonumber \\
&&- \left(\frac{\left[(j+1)\frac{1+\sqrt{5}}{2}\right]-j \frac{1+\sqrt{5}}{2}}{\sqrt{5}}\right) \left( \frac{1-\sqrt{5}}{2}\right)^n.
\en
Since every integer in ${\bN}_0$ appears exactly once (see \cite{Slo}), the collection $\{ a^{(j)}_n\}_{n\geq 0}$ ($j=0,1,\ldots$) gives a partition of ${\bN}_0$. $[(j+1)\alpha]$, $\alpha=(1+\sqrt{5})/2$, can be considered as the representative of the equivalence class $\{ a^{(j)}_n: n=0,1,\ldots\}$.  

The Wythoff array can be extended to similar arrays associated with different linear recurrence relations. For instance, one may define the Pell-Wythoff array shown in Table $3$.

\begin{center}
\begin{tabular}{cccccccccc}
  $0$ & $1$ & $\vline$ & $2$& $5$ & $12$ & $29$& $70$ & $169$&$\cdots$\\
 \rule[-3mm]{0mm}{8mm}
  $1$ & $3$& $\vline $ & $7$& $17$&$41$& $99$ & $239$ &$577$&$\cdots$\\ 
 \rule[-3mm]{0mm}{8mm}
 $2$ & $6$ & $\vline$ & $14$& $34$ & $82$ & $198$& $470$ & $1154$&$\cdots$\\
 \rule[-3mm]{0mm}{8mm}
 $3$ & $8$ & $\vline$ & $19$& $46$ & $111$ & $268$& $647$ &$1562$ & $\cdots$\\
 \rule[-3mm]{0mm}{8mm}
 $4$ & $11$ & $\vline$ & $26$& $63$ & $152$ & $367$& $886$& $2139$ &$\cdots$  \\
 \rule[-3mm]{0mm}{8mm}
 $\vdots$ &$\vdots$  &$\vline$ & $\vdots$& $\vdots$&$\vdots$ & $\vdots$&$\vdots$&$\vdots$
 \end{tabular}
 \centerline{Table $3$. the Pell-Wythoff array}
 \end{center}
 The first row sequence in Table $2$ is the Pell sequence, which is also the IRS in $A_2$  with respect to $E_2=\{2,1\}$. The $j$th row sequence $\{ b^{(j)}_n\}_{n\geq 0}$ are linear recurring sequences in $A_2$ with respect to $E_2=\{2,1\}$ with initials $(j, [(j+1)r]-1)$, $j=1,2,\ldots$, where $r=1+\sqrt{2}$. 

From (\ref{eq:5.3-0}) or (\ref{eq:5.1-2}), we obtain the expression of $b_n^{(j)}$ as 

\bn\label{eq:3.12}
b_n^{(j)}&=&\left( \frac{\left[(j+1)(1+\sqrt{2})\right]-j (1-\sqrt{2})-1}{2\sqrt{2}}\right) \left( 1+\sqrt{2}\right)^n\nonumber \\
&&- \left(\frac{\left[(j+1)(1+\sqrt{2})\right]-j (1+\sqrt{2})-1}{2\sqrt{2}}\right) \left( 1-\sqrt{2}\right)^n.
\en

From \cite{MSY}, the boustrophedon transform of a given sequence $\{ a_n\}_{n\geq 0}$ is the sequence produced by the triangle shown in Table $4$.

\begin{table} \centering
$\begin{array}{llllllll}
 a_0=b_0\\
  a_1\rightarrow  & b_1=a_0+a_1  \\
  b_2=a_1+a_2+b_1 &  \leftarrow a_2+b_1& \leftarrow a_2  \\
  a_3 \rightarrow & a_3+b_2 \rightarrow & a_2+a_3+b_1+b_2\rightarrow & b_3=2a_2+a_3+b_1+b_2  \\
 \end{array}$
\medbreak
\centerline{Table $4$. Triangle of the boustrophedon transform}
\end{table}
Formally, the entries $T_{n,k}$ ($n\geq k\geq 0$) of the above triangle are defined by 

\bn\label{eq:3.13}
&&T_{n,0}=a_n \quad n\geq 0,\nonumber \\
&&T_{n+1,k+1}=T_{n+1,k}+T_{n,n-k} \quad n\geq k\geq 0,
\en
and then $b_n=T_{n,n}$ for all $n\geq 0$. Denote the (exponential) generating functions of sequences $\{ a_n\}$ and $\{ b_n\}$ by $A(t)$ and $B(t)$. Then Theorem $1$ of \cite{MSY} gives 

\[
B(t)=(\sec t+\tan t) A(t),
\]
which implies a relationship of the generating function $\tilde P_r (t)$ of the IRS $\{ \tilde F^{(r)}_n\}$ of $A_r$ and the generating function $(BP_r)(t)$ of the boustrophedon transform sequence of $\{\tilde F^{(r)}_n\}$:

\be\label{eq:3.14}
(BP_r)(t)=(\sec t+\tan t) P_r(t)=(\sec t+\tan t)\frac{t^{r-1}}{1-\sum^r_{j=1}p_j t^j}.
\ee

\end{document}